\definecolor{qqqqff}{rgb}{0,0,1}
\definecolor{uququq}{rgb}{0.25,0.25,0.25}
\definecolor{cqcqcq}{rgb}{0.75,0.75,0.75}
\newtheorem{theorem}{Theorem}[section]
\newtheorem{corollary}[theorem]{Corollary}
\newtheorem{lemma}[theorem]{Lemma}
\theoremstyle{remark}
\theoremstyle{definition}
\newtheorem{definition}[theorem]{Definition}
\newtheorem{definitionandtheorem}[theorem]{Definition and Theorem}
\newtheorem{remark}[theorem]{Remark}
\newcommand{\R}{\mathbb{R}}
\newcommand{\C}{\mathbb{C}}
\newcommand{\RP}{\R\mathrm{P}}
\newcommand{\CP}{\C\mathrm{P}}
\newcommand{\GL}{\mathrm{GL}}
\newcommand{\SL}{\mathrm{SL}}
\date{} 
\author{Stefan Born, Ulrike B\"ucking, Boris Springborn}
\title{Quasiconformal dilatation of projective transformations and discrete conformal maps}
\begin{document}

\maketitle

\begin{abstract}
  We consider the quasiconformal
  dilatation of projective transformations of the real projective
  plane. For non-affine
  transformations, the contour lines of dilatation
  form a hyperbolic pencil of circles, and these are the only circles
  that are mapped to circles. We apply this result to analyze the
  dilatation of the circumcircle preserving piecewise
  projective interpolation between discretely conformally equivalent
  triangulations. We show that another interpolation scheme, angle
  bisector preserving piecewise projective interpolation, is in a
  sense optimal with respect to dilatation. These two
  interpolation schemes belong to a one-parameter family.

  \bigskip\noindent%
  30C62, 52C26
  
\end{abstract}

\section{Overview and motivation}
\label{sec:motivation}

The deviation of a function from being conformal is measured by its
quasiconformal dilatation. (Standard ways to quantify the local
deviation from conformality will be reviewed in
Section~\ref{sec:qc}.)  This paper is about the quasiconformal
dilatation of real projective transformations of the plane. In
Section~\ref{sec:circles1} we show that the contour lines of
dilatation form a hyperbolic pencil of circles
(Theorem~\ref{thm:hyppencil}). These are the only circles that are
mapped to circles (Theorem~\ref{thm:circles}). Although the statements
are elementary and the proofs are straightforward, these results seem
to be new. It may well be that the dilatation of a real projective map
was never considered before. We were motivated by the theory of
discrete conformal maps that is based on the following definition of
discrete conformal equivalence of triangle meshes. (Quasiconformal
dilatation also plays an important role in at least one other theory
of discrete conformality, circle packing~\cite{Stephenson05}.)

\begin{figure}[t]
\labellist
\small\hair 2pt
 \pinlabel {$i$} [t] at 48 5
 \pinlabel {$j$} [b] at 50 55
 \pinlabel {$k$} [r] at 0 31
 \pinlabel {$m$} [l] at 94 37
 \pinlabel {$\ell_{ij}$} [rm] at 43 31
\endlabellist
\centering
\includegraphics[scale=1.0]{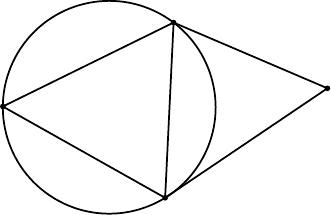}
\caption{Notation for the definition of discrete conformal
  equivalence. }
\label{fig:dconf}
\end{figure}

\begin{definitionandtheorem}[\cite{BPS10,Luo04}]
  \label{def:dconf}
  
  Two triangulated surfaces are considered \emph{discretely
    conformally equivalent}, if the triangulations are combinatorially
  equivalent and if one (and hence all) of the following equivalent
  conditions hold (see Figure~\ref{fig:dconf} for notation).

  \begin{compactenum}[(i)]
  \item The edge lengths $\ell_{ij}$ and $\tilde{\ell}_{ij}$ of
    corresponding edges are related by
    \begin{equation*}
      \tilde{\ell}_{ij}=e^{\frac{1}{2}(u_{i}+u_{j})}\ell_{ij}
    \end{equation*}
    for some logarithmic scale factors $u_{i}$ associated to the vertices.
  \item For interior edges $ij$, the \emph{length cross ratios} are equal,
    that is,
    \begin{equation*}
      \frac{\ell_{im}\ell_{jk}}{\ell_{mj}\ell_{ki}}
      =
      \frac{\tilde\ell_{im}\tilde\ell_{jk}}{\tilde\ell_{mj}\tilde\ell_{ki}} .
    \end{equation*}
  \item The \emph{circumcircle preserving projective maps} that map a
    triangle of one triangulation to the corresponding triangle of the
    other triangulation, and the respective circumcircles onto each
    other, fit together continuously across edges.
  \end{compactenum}
\end{definitionandtheorem}

The original definition~(i) is due to Feng Luo~\cite{Luo04}. For the
equivalent characterizations~(ii) and~(iii),
see~\cite{BPS10}. Characterization (iii) means that discretely
conformally equivalent triangle meshes allow not only the usual
piecewise linear (PL) interpolation (which works for any two
combinatorially equivalent triangulations) but also \emph{circumcircle
  preserving piecewise projective (CPP) interpolation} (which is not
continuous across edges unless two triangulations are discretely
conformally equivalent). This motivated the following
definition.

\begin{definition}[\cite{BPS10}]
  A simplicial continuous map between triangulated surfaces is a
  \emph{discrete conformal map} if the restriction to any triangle is
  a circumcircle preserving projective map onto the image triangle.
\end{definition}

Figure~\ref{fig:pl_vs_cpp}
\begin{figure}
  \centering
  
  \begin{minipage}[b]{158pt}
    \includegraphics[viewport=0 80 79 159, clip, scale=2]{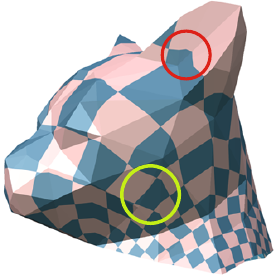}\\[-\baselineskip]
    \textsf{\small PL}
  \end{minipage}
  \quad
  \begin{minipage}[b]{158pt}
    \includegraphics[viewport=0 0 79 79, clip, scale=2]{figures/PLPJ}\\[-\baselineskip]
    \textsf{\small CPP}
  \end{minipage}\\[\bigskipamount]
  \begin{minipage}[b]{0.5\textwidth}
    \includegraphics[width=\textwidth]{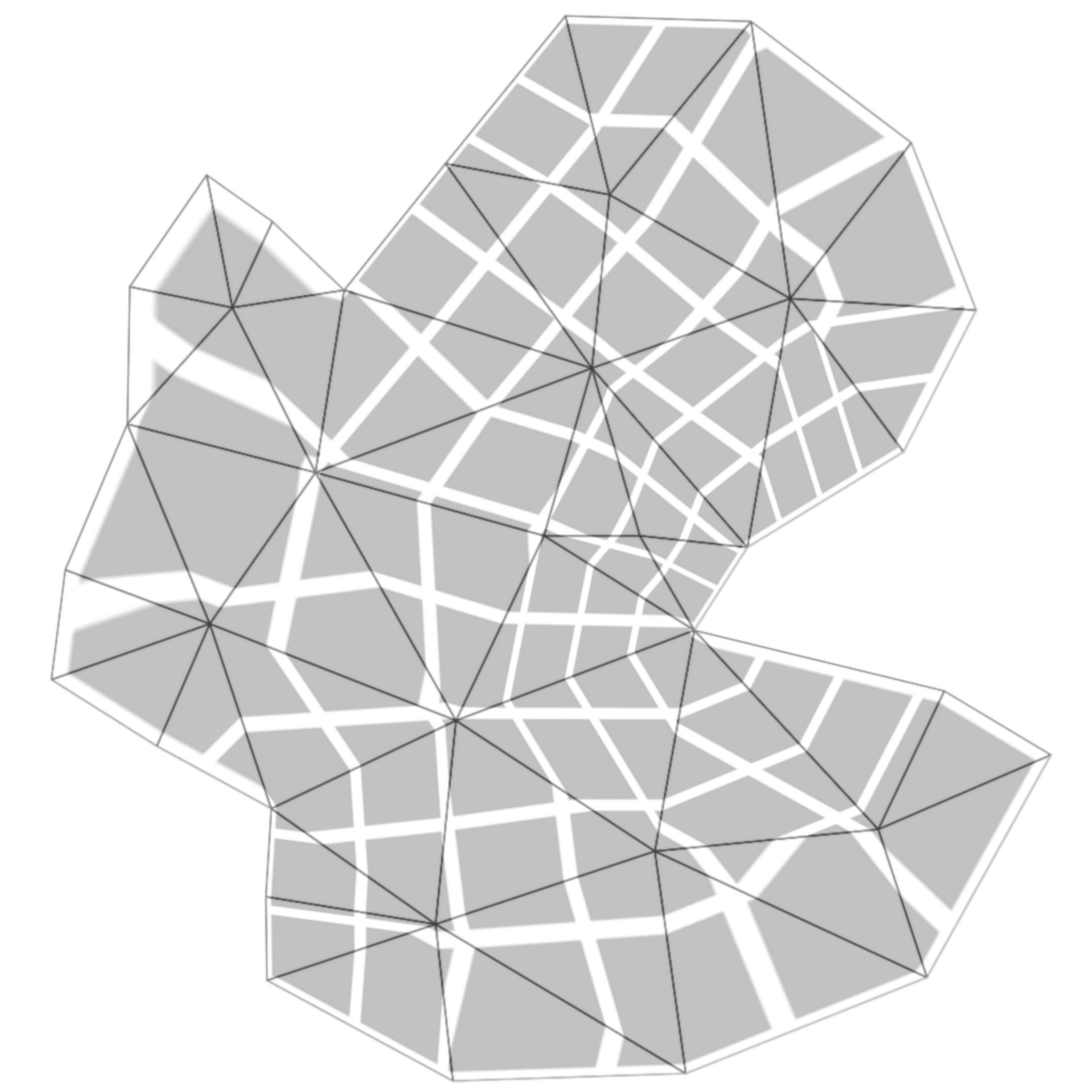}\\[-\baselineskip]
    \hspace*{0.5em}\textsf{\small PL}
  \end{minipage}%
  \begin{minipage}[b]{0.5\textwidth}
    \includegraphics[width=\textwidth]{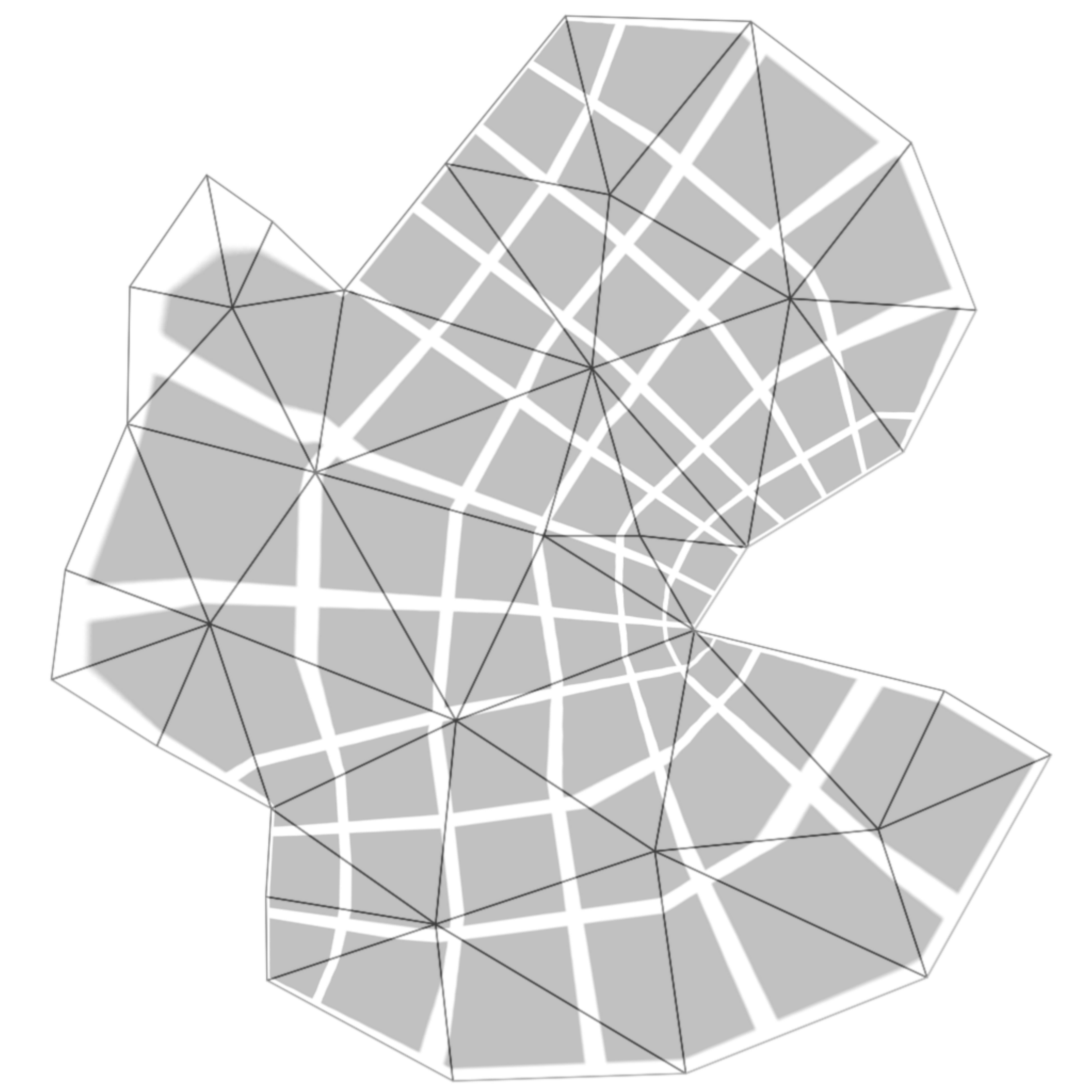}\\[-\baselineskip]
    \hspace*{0.5em}\textsf{\small CPP}
  \end{minipage}\\[\bigskipamount]
    \includegraphics[width=0.4\textwidth]{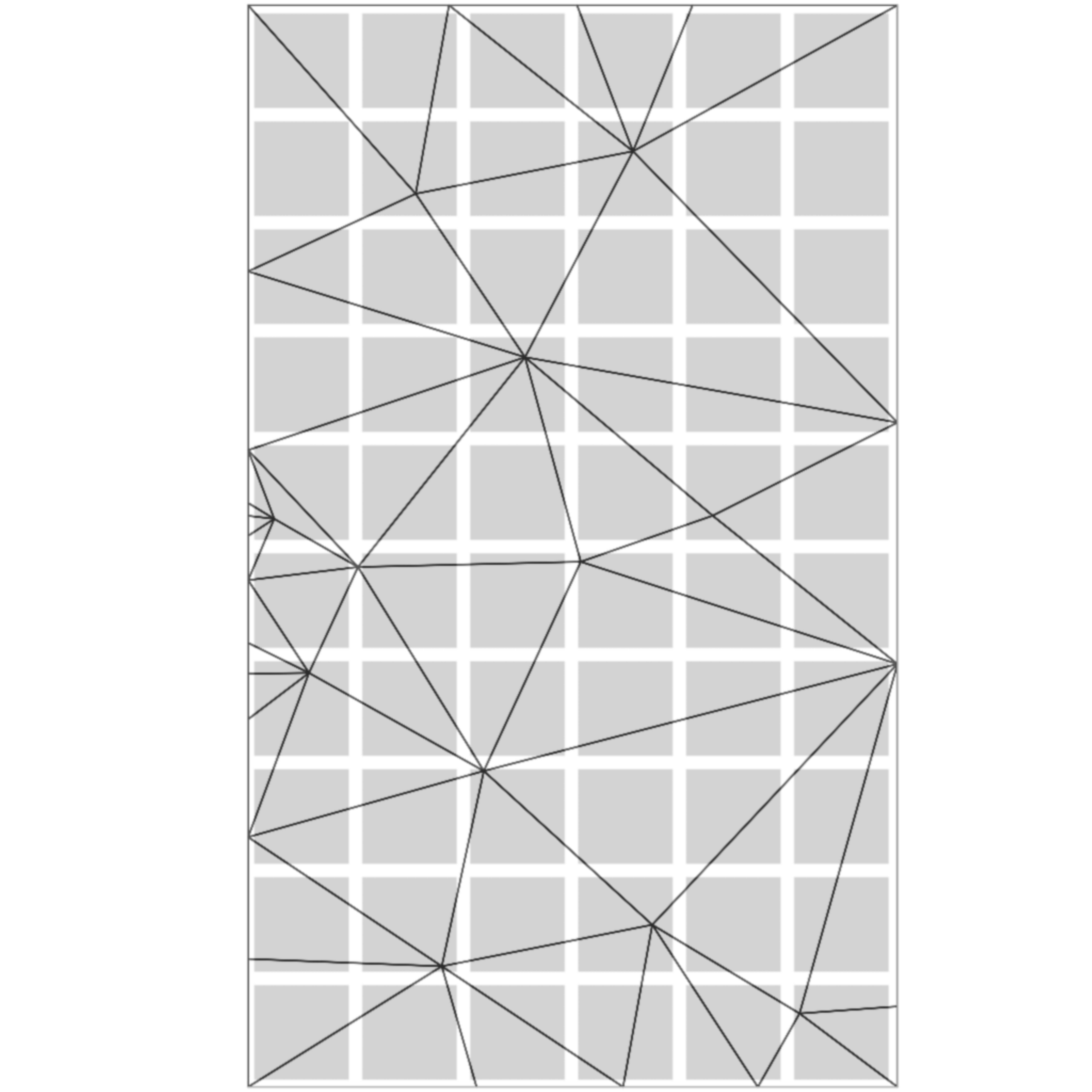}
  \caption{Piecewise linear (PL) vs circumcircle preserving piecewise
    projective (CPP) interpolation.  \emph{Top row~\cite{SSP08}:} A triangulated
    surface in $\R^{3}$ resembling a cat head is mapped to a
    conformally equivalent planar triangulation (not shown) by
    PL and CPP interpolation. The maps
    are visualized by pulling back a checkerboard pattern in the plane
    to the cat head. Some notable differences are highlighted.
    \emph{Middle row~\cite{BPS10}:} A triangulated planar region is mapped to a
    conformally equivalent triangulation of a rectangle~\emph{(bottom)}
    by PL and CPP interpolation.}
  \label{fig:pl_vs_cpp}
\end{figure}
shows visualizations of PL and CPP interpolations. The CPP
interpolations clearly look better. (This is an important point for
applications in computer graphics~\cite{SSP08}.) What is the reason?  This
question led us to investigate the quasiconformal dilatation of
projective transformations. We wanted to check the hypothesis that the
CPP interpolations had lower dilatation. Behind this
hypothesis was the non-mathematical hypothesis that lower
dilatation was the reason why CPP interpolation looks
better; however, see Remark~\ref{rem:why} in Section~\ref{sec:conclusion}.

As we show in Section~\ref{sec:cpp}, the dilatation of
CPP interpolation is indeed pointwise less than or equal to the
dilatation of PL interpolation, but the maximal dilatations per
triangle are equal (Corollary~\ref{cor:pointwise}). 

In Section~\ref{sec:angle_bisector}, we show that another
interpolation scheme, mapping angle bisectors to angle bisectors, is
in a sense optimal with respect to dilatation
(Theorem~\ref{thm:optimal}). Like CPP interpolation, angle bisector
preserving piecewise projective (APP) interpolation is continuous
across edges if and only if the triangulations are discretely
conformally equivalent (Theorem~\ref{thm:app_fit}).

The interpolations schemes PL, CPP, and APP are in fact members of a
continuous one-parameter family (see Section~\ref{sec:conclusion}). We do not
know any interesting geometric characterization for any other member
of this family.

\section{Dilatation and eccentricity}
\label{sec:qc}

In this section, we review the definitions of dilatation and eccentricity
of real differentiable maps. The definitions are standard (the
provided references are only meant as examples), but our
perspective is slightly unusual because (in
Section~\ref{sec:circles1}) we are interested in maps that become
singular and orientation reversing. (Real projective transformations
are generally not quasiconformal functions on $\C$.)

In the following, let $U \subseteq \C$ be open and let $f:U\to\C$ be a
real differentiable map. (We identify $\C$ and $\R^{2}$ as euclidean
vector spaces.) The function $f$ maps a small circle of radius
$\varepsilon$ around $z\in U$ approximately to an ellipse with major
semi-axis $\lambda_1\varepsilon$ and minor semi-axis
$\lambda_2\varepsilon$, where
\begin{equation*}
  \label{eq:lambda_ineq}
  \lambda_1\geq 
  \lambda_2\geq 0
\end{equation*}
are the singular values of the real derivative $df_{z}$ of $f$ at $z$.

The quotient of singular values is usually called the
\emph{dilatation}~\cite{Ahlfors_1954, Ahlfors_Lectures}, or, to be more specific,
the \emph{quasiconformal dilatation}~\cite{Papa_Theret}. However, since we
are interested in maps that may become singular and orientation
reversing, we define the \emph{[signed] dilatation}
$D_f(z)\in\R\cup\{\infty\}$ of $f$ at the point $z\in U$ by
\begin{equation*}
 D_f(z)= \pm \frac{\lambda_1}{\lambda_2},
\end{equation*}
where the sign is chosen according to whether $f$ is orientation
preserving ($+$) or reversing ($-$) at $z$. If the derivative $df$ is
singular at $z$, then $\lambda_{2}=0$ and we define $D_{f}(z)=\infty$. But we
will assume that $df$ vanishes nowhere, so the dilatation is well
defined everywhere.  Note the following properties of the dilatation:
\begin{compactitem}
\item $|D_f|\geq 1$,
\item $D_f=\pm 1$ where $f$ is conformal or 
  anticonformal, respectively,
\item $D_f(z)=D_{f^{-1}}(f(z))$.
\end{compactitem}

The 
\emph{Beltrami coefficient}
\begin{equation}
 \label{eq:defmu}
\mu_f(z)=f_{\bar{z}}/f_z
\end{equation}
is defined in terms of the Wirtinger derivatives
\begin{equation*}
 f_z=\frac{1}{2}(f_x-if_y),\qquad f_{\bar{z}}=\frac{1}{2}(f_x+if_y).
\end{equation*}
The modulus of the Beltrami coefficient is called the
\emph{eccentricity} of $f$~\cite{Ahlfors_1954}. 
The eccentricity $|\mu_f|$ is related to the dilatation $D_f$ by
\begin{equation*}
  |\mu_f| =\frac{D_f-1}{D_f+1}.
\end{equation*}
Note that $|\mu_f|\in[0,\infty]$, and
\begin{align*}
  |\mu_f| &=
            \begin{cases}
              0      & \text{ where $f_{\bar{z}}=0$, that is, where $f$ is 
conformal},\\ 
              \infty & \text{ where $f_{z}=0$, that is, where $f$ is 
anticonformal},
            \end{cases}\\
  |\mu_f| & \lesseqqgtr 1\quad\text{where}\quad\det df \gtreqqless 0,\\
\end{align*}
and
\begin{equation}
  \label{eq:mu_f_inv}
  |\mu_{f}(z)| = |\mu_{f^{-1}}(f(z))|.
\end{equation}

\begin{remark}[on terminology]
  (i) A function is called \emph{quasiconformal} if it is orientation
  preserving and its dilatation is bounded. In phrases like
  ``quasiconformal dilatation'', the adjective ``quasiconformal'' is
  used somewhat sloppily to mean ``belonging to the theory of
  quasiconformal functions''. (ii) The above definition of
  ``eccentricity'' has fallen into disuse. The
  Beltrami coefficient~$\mu_f$ is also called \emph{complex
    dilatation}~\cite{Ahlfors_Lectures}, although \emph{complex
    eccentricity} would make more sense.
\end{remark}

\section{Dilatation of a projective map, and circles mapped to circles}
\label{sec:circles1}

We are interested in the dilatation of a projective map
\begin{equation*}
  f:\RP^{2}\rightarrow\RP^{2},\quad [x]\mapsto[Ax],\quad A\in \GL_{3}(\R),
\end{equation*}
where we identify the complex plane $\C$ with the real projective plane
$\RP^{2}$ without the line $\ell_{\infty}=\{[x]\,|\,x_{3}=0\}$ via the
map
\begin{equation*}
  z=x+ iy
  \longleftrightarrow
  \Big[
    \begin{smallmatrix}
      x\\y\\1
    \end{smallmatrix}
    \Big].
\end{equation*}
This provides the conformal structure on
$\RP^{2}\setminus\ell_{\infty}$. We consider the line $\ell_{\infty}$
as the line at infinity. The map $f$ is [real] affine if it maps
$\ell_{\infty}$ to $\ell_{\infty}$. The map $f$ is a similarity
transformation if it is real affine and complex affine on $\C$, that is,
$z\mapsto az+b$ with $a\in\C\setminus\{0\}$, $b\in\C$.

The eccentricity of affine maps is constant, and
identically $0$ for similarity transformations. The following theorem
treats the interesting case (see Figure~\ref{fig:circles}).
\begin{figure}
  \centering
  \labellist
  \small\hair 2pt
  \pinlabel {$|\mu_{f}|=0$} [r] at 4 83
  \pinlabel {$|\mu_{f}|=\infty$} [l] at 235 83
  \pinlabel {$|\mu_{f}|=1$ on $f^{-1}(\ell_{\infty})$} [l] at 152 3
  \endlabellist
  \includegraphics[width=0.6\textwidth]{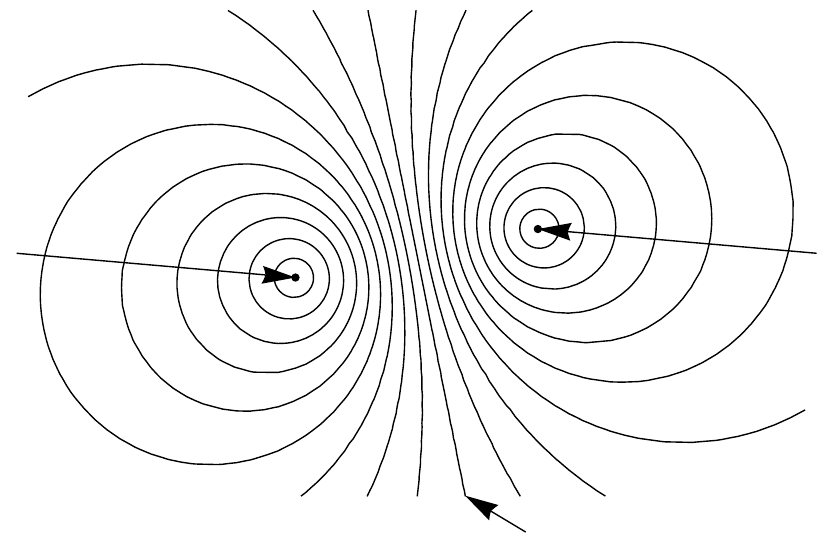}
  \caption{The contour lines of $|\mu_{f}|$ are a hyperbolic pencil of
    circles (Theorem~\ref{thm:hyppencil}).}
  \label{fig:circles}
\end{figure}

\begin{theorem}
\label{thm:hyppencil}
If the projective transformation $f:\RP^{2}\rightarrow\RP^{2}$ is not
affine then: 
\vspace{-0.5\baselineskip}
\begin{enumerate}[(i)]
\setlength\itemsep{-0.3em}
 \item The contour lines of $|\mu_{f}|$ are a hyperbolic pencil of circles.
\item The function~$|\mu_{f}|$ is an affine parameter for this
$1$-parameter family of circles that assigns the values~$0$
and~$\infty$ to the circles that degenerate to points and the
value~$1$ to the preimage of the line at infinity under $f$.
\item The function~$f$ maps this hyperbolic pencil of circles to
another hyperbolic pencil of circles
\end{enumerate}
\end{theorem}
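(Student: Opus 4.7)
The plan is to normalize $f$ by pre- and post-composing with Euclidean similarities, reduce to an explicit normal form, and compute $\mu_f$ directly. A standard Wirtinger chain-rule calculation gives $|\mu_{g_2\circ f\circ g_1}|(z)=|\mu_f|(g_1(z))$ for any conformal similarities $g_1,g_2$; since similarities also map circles to circles and hyperbolic pencils to hyperbolic pencils, all three assertions (i)--(iii) are invariant under such pre- and post-composition, so it suffices to work in a normal form.

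Since $f$ is non-affine, both $f^{-1}(\ell_{\infty})$ and $f(\ell_{\infty})$ are finite lines in $\C$. Choosing $g_1$ so that $g_1$ maps the real axis onto $f^{-1}(\ell_{\infty})$, and $g_2$ so that it maps $f(\ell_{\infty})$ onto the real axis, I replace $f$ by $g_2\circ f\circ g_1$ and may assume both $f^{-1}(\ell_{\infty})$ and $f(\ell_{\infty})$ are the real axis. The first condition forces the bottom row of the matrix $A\in\GL_{3}(\R)$ of $f$ to be a multiple of $(0,1,0)$, and the second forces the middle row to be a multiple of $(0,0,1)$, so up to rescaling
\[
  A=\begin{pmatrix} a & b & c \\ 0 & 0 & h \\ 0 & 1 & 0 \end{pmatrix}, \qquad ah\neq 0.
\]
Substituting $x=(z+\bar z)/2$ and $y=(z-\bar z)/(2i)$ yields the closed form
\[
  f(z)=\frac{(ai+b)\,z+(ai-b)\,\bar z+2(ci-h)}{z-\bar z}.
\]

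Routine Wirtinger differentiation of this rational expression and simplification (using that $\alpha+\beta=2ai$ is purely imaginary) will then produce
\[
  |\mu_f(z)|=\frac{|z-p|}{|z-\bar p|}, \qquad p=-\frac{c+ih}{a}\in\C\setminus\R,
\]
which is exactly the Apollonius coordinate with the pair of complex-conjugate foci $p,\bar p$. Its level curves are precisely the hyperbolic pencil of circles with limit points $p$ and $\bar p$, which proves~(i). Claim~(ii) is then immediate: $|\mu_f|$ takes the value $0$ at $p$, the value $\infty$ at $\bar p$, and on the perpendicular bisector of $p,\bar p$ — which in this normal form is exactly the real axis, i.e.\ $f^{-1}(\ell_{\infty})$ — it takes the value $1$. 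For~(iii) I apply~(i) to $f^{-1}$, which is also projective and non-affine, and use the identity $|\mu_f(z)|=|\mu_{f^{-1}}(f(z))|$ from~\eqref{eq:mu_f_inv}: the image $f(\{|\mu_f|=k\})$ equals $\{|\mu_{f^{-1}}|=k\}$, which by~(i) applied to $f^{-1}$ is a member of a hyperbolic pencil of circles.

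The only genuine calculation is the algebraic simplification producing the Apollonius formula $|\mu_f|=|z-p|/|z-\bar p|$; once that is in hand every remaining step is geometric bookkeeping. So that Wirtinger simplification is where any difficulty lies, but it is purely mechanical once the normal form is established.
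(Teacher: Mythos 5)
Your proof is correct and takes essentially the same route as the paper: both compute the Beltrami coefficient explicitly from a coordinate representation of $f$, recognize $|\mu_{f}|$ as the modulus of a M\"obius transformation (equivalently, an Apollonius ratio), whose level sets form the hyperbolic pencil with the two zeros/poles as limit points, and both deduce (iii) by applying (i) to $f^{-1}$ together with $|\mu_{f}(z)|=|\mu_{f^{-1}}(f(z))|$. Your preliminary normalization by similarities is a tidy variant that replaces the paper's ``tedious but straightforward'' nondegeneracy check $\bar{p}\det A\neq 0$ with the visibly nonreal limit point $p=-(c+ih)/a$ (nonreal since $ah\neq0$), but the underlying argument is the same.
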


The following proof relies on direct calculations. It would be nice to
have a more conceptual argument.

\begin{proof}
  In terms of the affine parameter $z$ on
  $\RP^{2}\setminus\ell_{\infty}$, the projective map $f$ may be written as
  \begin{equation}
    \label{eq:f_of_z}
    z \mapsto \frac{az+b\bar{z}+c}{pz+\bar{p}\bar{z}+q}, 
    \quad a,b,c,p\in\C,\quad q\in\R,
  \end{equation}
  where $p\neq 0$ because $f$ is not affine. In terms of the
  coefficients $a,b,c,p,q$, the matrix representing $f$ is
  \begin{equation*}
    \label{eq:A}
    A=
    \begin{pmatrix}
      a_{1}+b_{1} & -a_{2}+b_{2} & c_{1} \\
      a_{2}+b_{2} & a_{1}-b_{1} & c_{2} \\
      2p_{1} & -2p_{2} & q
    \end{pmatrix},
  \end{equation*}
  where indices $1$ and $2$ to denote real and imaginary parts, respectively.

  From the definition~\eqref{eq:defmu} of $\mu$, using the
  representation~\eqref{eq:f_of_z} for~$f$, one obtains immediately
  \begin{equation*}
    \label{eq:mu_f_projective}
    \mu_f(z)=\frac{\alpha z+\beta}{-\alpha \bar{z} +\gamma},
    \quad\text{where}\quad
    \left\{
      \begin{aligned}
        \alpha &= bp-a\bar{p}\\ 
        \beta  &= bq-c\bar{p}\\ 
        \gamma &= aq-cp
      \end{aligned}   
    \right.\ ,
  \end{equation*}
  so 
  \begin{equation*}
    |\mu_{f}(z)|=\left|\frac{\alpha z + \beta}{-\bar{\alpha}z+\bar{\gamma}}\right|.
  \end{equation*}
  As a tedious but straightforward calculation shows,
  \begin{equation*}
    \label{eq:det_A}
    \det
    \begin{pmatrix}
      \alpha & \beta \\ -\bar{\alpha} & \bar{\gamma}
    \end{pmatrix}
    =
    \bar{p}\det A \neq 0,
  \end{equation*}
  so $|\mu_{f}(z)|=|M(z)|$,
  where $M$ is a M\"obius transformation. Parts (i) and (ii) of the
  theorem follow easily.
 
  To see part~(iii), note that the inverse map $f^{-1}$ is again a
  projective map which is not affine. Therefore the contour lines of
  $|\mu_{f^{-1}}|$ are also the circles of a hyperbolic pencil of
  circles.  Because $f$ and $f^{-1}$ have the same dilatation at
  corresponding points (equation~\eqref{eq:mu_f_inv}), $f$ maps
  circles with constant $|\mu_{f}|$ to circles with constant
  $|\mu_{f^{-1}}|$. This proves~(iii).
\end{proof}

\begin{corollary}
  \label{cor:max_at_vertex}
  If $f$ is orientation preserving on a triangle $T$ then the function
  $|\mu_{f}|$ attains the maximum
  \begin{equation*}
    \max_{z\in T}|\mu_{f}(z)|
  \end{equation*}
  at a vertex.
\end{corollary}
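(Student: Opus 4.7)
The plan is to show that $|\mu_f|$, restricted to the region where $f$ is orientation preserving, is quasi-convex (all its sublevel sets are convex), and then invoke the elementary fact that a quasi-convex function on a convex set attains its maximum at an extreme point.

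If $f$ is affine, $|\mu_f|$ is constant (as noted just before Theorem~\ref{thm:hyppencil}) and there is nothing to prove. So assume $f$ is not affine. Since $f$ is orientation preserving on $T$, we have $|\mu_f|<1$ on $T$; that is, $T$ is contained in the open region $\Omega\subset\C$ on which $|\mu_f|<1$. By Theorem~\ref{thm:hyppencil}(ii), $\Omega$ is one of the two half-planes cut off by the line $f^{-1}(\ell_\infty)$, namely the one containing the limit point $p_0$ at which $|\mu_f|$ vanishes.

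The key geometric step is to identify, for each $c\in[0,1)$, the sublevel set $S_c=\{z\in\C : |\mu_f(z)|\leq c\}$ as a closed disk (degenerating to $\{p_0\}$ at $c=0$). From the proof of Theorem~\ref{thm:hyppencil} we have $|\mu_f|=|M|$ for some M\"obius transformation $M$. For $c\in(0,1)$, Theorem~\ref{thm:hyppencil} tells us that the boundary of $S_c$ is a true circle in $\C$ (not a line), so $S_c$ is either a closed disk or the closed exterior of an open disk. As $c\to 0^+$, $S_c$ shrinks to the point $p_0$, so for small $c$ it is a small disk around $p_0$; by continuity in $c$ it must remain the disk alternative throughout $c\in(0,1)$. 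Hence every sublevel set of $|\mu_f|$ on $\Omega$ is convex, i.e.\ $|\mu_f|$ is quasi-convex on $\Omega$.

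Finally, because $T\subset\Omega$ is the convex hull of its three vertices and $|\mu_f|$ is quasi-convex on $\Omega$, every point $z\in T$ lies in the convex sublevel set $\{|\mu_f|\leq c_*\}$, where $c_*$ denotes the largest of the three values of $|\mu_f|$ at the vertices; hence $|\mu_f(z)|\leq c_*$, and the maximum is attained at a vertex. The main obstacle is the disk-versus-exterior dichotomy above; once that is settled, the corollary reduces to the standard maximum principle for quasi-convex functions on a simplex.
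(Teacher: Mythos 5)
Your proof is correct and follows essentially the same route as the paper: the sublevel sets of $|\mu_{f}|$ in the half-plane where $f$ is orientation preserving are disks, hence convex, so the maximum over the triangle is attained at a vertex. The only difference is that you spell out the disk-versus-exterior-of-disk dichotomy (which the paper takes for granted) and you handle the affine case separately, whereas the paper additionally uses \emph{strict} convexity to exclude the relative interiors of the sides -- a refinement your weaker conclusion does not need.
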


Indeed, in the open half-plane where $f$ is orientation preserving, the
sublevel sets of $|\mu_{f}|$ are strictly convex (being disks), and
every point is on the boundary of its sublevel set. Hence, the maximum
cannot be attained in the interior of the triangle or in the relative
interior of a side.

\begin{remark}
  Here and throughout this article, ``triangle'' shall mean ``closed
  triangular region''.
\end{remark}

Theorem~\ref{thm:hyppencil} suggests the following question: Which
circles are mapped to circles by a projective map $f$? Obviously, a
similarity transformation maps all circle to circles, and an affine
transformation that is not a similarity maps all circle to ellipses
that are not circles. The following theorem treats the remaining
case.

\begin{theorem}
  \label{thm:circles}
  If the projective transformation $f:\RP^{2}\rightarrow\RP^{2}$ is
  not affine then it maps precisely the circles of a hyperbolic pencil
  to circles.
\end{theorem}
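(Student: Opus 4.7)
The plan is to complexify and use the classical projective characterization of Euclidean circles. A real conic in $\RP^{2}$ is a circle exactly when its extension to $\CP^{2}$ passes through the two circular points at infinity $I=[1\!:\!i\!:\!0]$ and $\bar I=[1\!:\!-i\!:\!0]$. Since $f$ is given by a real matrix, it extends to a complex projective automorphism of $\CP^{2}$, which carries conics to conics. Hence $f(C)$ is a circle if and only if $f(C)$ contains both $I$ and $\bar I$, equivalently, if and only if $C$ contains $P:=f^{-1}(I)$ and $\bar P:=f^{-1}(\bar I)$.

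I then characterise the circles through the complex conjugate pair $P,\bar P$ as a pencil. In the three-dimensional real projective space of circle equations $A|z|^{2}+Bz+\bar B\bar z+D=0$, containment of $P$ is one complex linear condition on the real coefficients, equivalent to two real conditions; containment of $\bar P$ is the complex conjugate equation, automatically satisfied. This cuts out a $\RP^{1}$-family --- a pencil of circles. Provided $P$ is a genuinely non-real point of the finite affine plane, so that $P$ and $\bar P$ are two distinct base points lying off the real plane, the pencil is by definition hyperbolic, since its base points form a complex conjugate pair rather than two distinct real points or a coincident real point. Comparison with Theorem~\ref{thm:hyppencil} identifies this pencil with the pencil of contour lines of $|\mu_{f}|$.

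The main obstacle is the non-degeneracy check for $P$, which all boils down to one observation: the only real line in $\RP^{2}$ whose complexification passes through $I$ is $\ell_{\infty}$ itself, because $I=[1\!:\!i\!:\!0]$ forces the first two coefficients of any real defining equation to vanish. Non-affineness of $f$, meaning $f(\ell_{\infty})\neq\ell_{\infty}$, then yields $P\notin\ell_{\infty}$ in $\CP^{2}$ (else $I=f(P)$ would lie on the complexification of a real line distinct from $\ell_{\infty}$), and also $P\neq I$ (else $f$ would fix $I$ and hence preserve $\ell_{\infty}$). The remaining requirement $P\notin\RP^{2}$ is automatic from $I\notin\RP^{2}$ and the fact that $f$ preserves the real locus.
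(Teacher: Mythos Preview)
Your proof is correct and follows essentially the same route as the paper's: both arguments complexify, invoke the classical characterization of circles as conics through the circular points $I,\bar I$, observe that $f$-preimages $P=f^{-1}(I),\ \bar P=f^{-1}(\bar I)$ are the extra base points, and deduce a hyperbolic pencil once the relevant non-degeneracy is established from non-affineness. The only organisational difference is that the paper works in the full space of conics and checks that the four points $I,\bar I,P,\bar P$ are in general position (no three collinear, because any such line would be $\ell_\infty$ or $f^{-1}(\ell_\infty)$, and these meet only in a real point), whereas you first restrict to the $\RP^{3}$ of circle equations and then impose the single complex condition of containing $P$; your separate checks $P\notin\ell_\infty$, $P\notin\RP^{2}$ amount to the same thing. (Note that your clause ``and also $P\neq I$'' is already implied by $P\notin\ell_\infty$, so it is redundant, though harmless.)
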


\begin{proof} 
  This proof relies on the classical characterization of circles in
  terms of their complex intersection points with the line at
  infinity. Consider the real projective plane $\RP^{2}$ as the set of
  points with real homogeneous coordinates in the complex projective
  plane $\CP^{2}$. In $\CP^{2}$, every conic section intersects the
  line at infinity twice (counting multiplicity). Circles are
  the conics intersecting the line at infinity in the \emph{imaginary
    circle points}
  \begin{equation*}
    \label{eq:circle_points}
    K= 
    \left[
      \begin{smallmatrix} 1\\i\\0 \end{smallmatrix} 
    \right],
    \quad
    \bar{K}= 
    \left[
      \begin{smallmatrix} 1\\-i\\0 \end{smallmatrix}
    \right].
  \end{equation*}

  The circles that $f$ maps to circles are the conics containing $K$,
  $\bar{K}$, $f^{-1}(K)$, and $f^{-1}(\bar{K})$. Because $f$ is real
  and not affine, these four points are in general position: No three
  of them are contained in a line, which would have to be the line at
  infinity or its preimage under $f$. But these lines intersect in a
  real point. Hence the conics containing the four points form a
  pencil of conics. Since the four points are two pairs of complex
  conjugates, the pencil is a hyperbolic pencil of real conics.
\end{proof}

\section{The circumcircle preserving projective map}
\label{sec:cpp}

The following theorem characterizes the maximal value of $|\mu_f|$ for a 
circumcircle preserving projective map on a triangle.

\begin{theorem}
\label{thm:cpp}
If the projective map $f$ maps a triangle $T$ with vertices $A$, $B$,
$C$ onto a triangle $\tilde T$ with vertices $\tilde A$, $\tilde B$,
$\tilde C$, and the circumcircle of $T$ to the circumcircle of
$\tilde T$, then
  \begin{equation}
    \label{eq:cpp_distortion}
    |\mu_{f}(A)|=|\mu_{f}(B)|=|\mu_{f}(C)|=|\mu_{h}|,
  \end{equation}
  where $h$ is the real affine map from $T$ onto $\tilde T$ (whose
  eccentricity $|\mu_{h}|$ is constant). 
\end{theorem}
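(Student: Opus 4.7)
The plan is to split the argument: the three equalities $|\mu_f(A)|=|\mu_f(B)|=|\mu_f(C)|$ follow from the material of Section~\ref{sec:circles1}, while the identification with $|\mu_h|$ is reduced to an infinitesimal computation at a single vertex.

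If $f$ is a similarity the claim is trivial, so assume it is not; the condition $f(\Gamma)=\tilde\Gamma$ then forces $f$ to be non-affine, so Theorems~\ref{thm:hyppencil} and~\ref{thm:circles} both apply. By Theorem~\ref{thm:hyppencil}(iii) the contour circles of $|\mu_f|$, which form a hyperbolic pencil by part~(i), are mapped to circles; by Theorem~\ref{thm:circles} the circles mapped to circles form a hyperbolic pencil. Since a hyperbolic pencil is determined by its two point-circle members, a hyperbolic pencil contained in another must coincide with it; hence the contour circles of $|\mu_f|$ are exactly the circles $f$ sends to circles. By hypothesis $\Gamma$ is such a circle, so $|\mu_f|$ is constant on $\Gamma$, proving the first three equalities.

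For the remaining equality $|\mu_f(A)|=|\mu_h|$, observe that because $f$ and $h$ are projective maps sending $A,B,C$ to $\tilde A,\tilde B,\tilde C$, they both send the line $AB$ to $\tilde A\tilde B$ and the line $AC$ to $\tilde A\tilde C$. Hence at $A$ there exist positive real numbers $\mu_B,\mu_C$ with
\begin{equation*}
df_A(B-A)=\mu_B(\tilde B-\tilde A)=\mu_B\,dh(B-A),\qquad df_A(C-A)=\mu_C\,dh(C-A),
\end{equation*}
so $L_A:=dh^{-1}\circ df_A$ is the real linear map that is diagonal in the basis $\{B-A,\,C-A\}$ with real eigenvalues $\mu_B,\mu_C$. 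Writing $L_A(z)=\ell z+m\bar z$ in standard coordinates, a direct computation of the Beltrami coefficient of the composition $df_A=dh\circ L_A$ gives
\begin{equation*}
\mu_f(A)=\frac{m+\mu_h\,\bar\ell}{\ell+\mu_h\,\bar m},
\end{equation*}
so the goal reduces to the algebraic identity $|m+\mu_h\bar\ell|=|\mu_h|\,|\ell+\mu_h\bar m|$.

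The main obstacle is showing that the CPP hypothesis forces this identity. The CPP condition is equivalent, once the vertex correspondences are fixed, to the tangent matching $df_A(t_A)=M_1'(A)\,t_A$, where $t_A$ is the unit tangent to $\Gamma$ at $A$ and $M_1$ is the complex M\"obius transformation sending $A,B,C$ to $\tilde A,\tilde B,\tilde C$. Expanding $t_A$ in the basis $\{B-A,C-A\}$ and inserting this into the tangent condition yields two real equations that pin down $\mu_B$ and $\mu_C$ in terms of the vertex positions and $M_1'(A)$; substituting into $\ell,m$ and using the Cramer's rule expressions for $\alpha,\beta$ (and hence $\mu_h$) should reduce the required identity to a symbolic equality verifiable by expansion. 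An equivalent and perhaps more manageable route is to use source- and target-side similarities to reduce to the normalized situation $A=\tilde A=0$ with both circumcircles coinciding and the common tangent at $0$ horizontal; then $c=0$ in $f(z)=(az+b\bar z+c)/(pz+\bar p\bar z+q)$ and the vertex and tangent equations form a small linear system whose solution can be read off to give $|b/a|=|\mu_h|$ directly.
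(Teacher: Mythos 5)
Your first half is correct and is essentially the paper's own argument: an affine non-similarity maps every circle to a non-circular ellipse, so $f$ is non-affine; combining Theorem~\ref{thm:hyppencil} with Theorem~\ref{thm:circles} identifies the hyperbolic pencil of circles that $f$ maps to circles with the contour pencil of $|\mu_f|$, so the circumcircle is a contour line and the eccentricities at the three vertices agree. Your reduction of the remaining equality is also sound as far as it goes: $L_A=dh^{-1}\circ df_A$ is diagonal in the basis $\{B-A,\,C-A\}$ with positive real eigenvalues, the formula $\mu_f(A)=(m+\mu_h\bar\ell)/(\ell+\mu_h\bar m)$ is correct, and the tangent condition does hold with genuine equality (not just proportionality), because $f$ and the M\"obius transformation $M_1$ restrict to the same projective transformation of the circumcircle, being determined there by the three vertex images --- though you should say this, since you assert the equivalence without justification.

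The gap is that the decisive step --- showing the circle-preservation constraint forces $|m+\mu_h\bar\ell|=|\mu_h|\,|\ell+\mu_h\bar m|$ --- is only announced (``should reduce \ldots to a symbolic equality verifiable by expansion''), never performed. That identity \emph{is} the theorem; until it is verified, nothing has been proved about the relation between $|\mu_f(A)|$ and $|\mu_h|$. The paper closes exactly this step with no computation at all, and you should either adopt its argument or actually carry out your expansion. The synthetic route: by the tangent--chord angle theorem, the tangent $\ell_3'$ to the circumcircle at $A$ makes angles $\gamma$ and $\beta$ with $AB$ and $AC$, i.e.\ the same pair of angles (in swapped order) as the line $\ell_3$ through $A$ parallel to $BC$, and likewise at $\tilde A$. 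An affine map taking three concurrent lines to three concurrent lines is unique up to homothety, so its eccentricity is determined by the line triples; since $(\ell_1,\ell_2,\ell_3')$ and $(\tilde\ell_1,\tilde\ell_2,\tilde\ell_3')$ are obtained from $(\ell_1,\ell_2,\ell_3)$ and $(\tilde\ell_1,\tilde\ell_2,\tilde\ell_3)$ by reflections at $A$ and $\tilde A$, the affine approximation of $f$ at $A$ equals $h$ up to composition with similarities, whence $|\mu_f(A)|=|\mu_h|$. If you prefer your computational route, you must finish it: in your normalization ($A=\tilde A=0$, common circumcircle, horizontal tangent, $c=0$, so $\mu_f(0)=b/a$) you still owe the explicit solution of the vertex-and-tangent system and the verification that $|b/a|=|\mu_h|$.
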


Together with the results of the previous section, Theorem~\ref{thm:cpp}
implies the following corollary. 

\begin{corollary}
  \label{cor:pointwise}
  If a projective map $f$ satisfying the assumptions of
  Theorem~\ref{thm:cpp} is orientation preserving on $T$, then
  \begin{equation*}
    \label{eq:pointwise}
    |\mu_{f}(P)|\leq|\mu_{h}(P)|,
  \end{equation*}
  for all points $P\in T$, and
  \begin{equation*}
    \label{eq:maxcpp}
    \max_{P\in T}|\mu_{f}(P)|=|\mu_{h}|.
  \end{equation*}
\end{corollary}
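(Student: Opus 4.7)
The plan is that this corollary follows almost immediately by combining Theorem~\ref{thm:cpp} with Corollary~\ref{cor:max_at_vertex}, with no new computation required. First I would observe that since $h$ is affine, the paper has already noted that its eccentricity is constant, so writing $|\mu_h|$ for this constant value we have $|\mu_h(P)| = |\mu_h|$ for every $P \in T$. Thus the two claims reduce to the single pair of statements $|\mu_f(P)| \leq |\mu_h|$ for all $P \in T$, and this upper bound is attained.

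Next, by Theorem~\ref{thm:cpp}, $|\mu_f|$ takes the common value $|\mu_h|$ at each of the three vertices $A$, $B$, $C$. The assumption that $f$ is orientation preserving on $T$ places $T$ inside the open half-plane on which $|\mu_f| < 1$ and on which, by Theorem~\ref{thm:hyppencil}, the sublevel sets of $|\mu_f|$ are strictly convex disks. Corollary~\ref{cor:max_at_vertex} therefore applies, giving
\begin{equation*}
  \max_{P \in T} |\mu_f(P)| = \max\bigl(|\mu_f(A)|, |\mu_f(B)|, |\mu_f(C)|\bigr) = |\mu_h|.
\end{equation*}
The pointwise inequality $|\mu_f(P)| \leq |\mu_h| = |\mu_h(P)|$ is then immediate from the definition of the maximum.

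There is no real obstacle to overcome here: the substantive work has all been done in establishing Theorem~\ref{thm:cpp} (equality at the vertices) and in deriving Corollary~\ref{cor:max_at_vertex} (the convex-sublevel-set argument that forces the maximum onto a vertex). The only thing to check when writing this out is that the hypothesis ``$f$ orientation preserving on $T$'' is exactly what is needed to invoke Corollary~\ref{cor:max_at_vertex}, which it is.
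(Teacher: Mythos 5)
Your proof is correct and is essentially the argument the paper intends: the paper derives the corollary by combining Theorem~\ref{thm:cpp} (equality $|\mu_f|=|\mu_h|$ at the three vertices) with Corollary~\ref{cor:max_at_vertex} (the maximum of $|\mu_f|$ over $T$ is attained at a vertex when $f$ is orientation preserving), together with the constancy of $|\mu_h|$. No discrepancy to report.
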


\begin{remark}
  The projective map $f$ of Theorem~\ref{thm:cpp} is either orientation preserving or
  orientation reversing on all of~$T$. Since $f$ maps circumcircle to
  circumcircle, neither $\ell_{\infty}$ nor $f^{-1}(\ell_{\infty})$
  intersects~$T$.
\end{remark}

\begin{proof}[Proof of Theorem~\ref{thm:cpp}]
Since the circumcircle is mapped to the circumcircle, it is a contour
line of $|\mu_{f}|$ (Theorems~\ref{thm:hyppencil} and~\ref{thm:circles}). This proves the
first two equalities of equation~\eqref{eq:cpp_distortion}. It remains to show the last equality
involving the eccentricity of the affine map. We will avoid any
calculation of derivatives. 

Given three lines $\ell_1,\ell_2,\ell_3$ intersecting in one point $P$
and three lines $\tilde{\ell}_1, \tilde{\ell}_2, \tilde{\ell}_3$
intersecting in one point $\tilde P$, there exists an affine map $F$
mapping $\ell_j$ to $\tilde{\ell}_j$ ($j=1,2,3$), and this map is
unique up to post-composition with a homothety centered at $\tilde P$
(or, which amounts to the same, pre-composition with a homothety
centered at $P$).  The value of $\mu_F$ is therefore uniquely
determined by the lines.

The affine map $h$ maps the point $A$ to the point $\tilde A$ and the
lines $\ell_{1}=AB$, $\ell_{2}=AC$, and the line $\ell_{3}$ parallel
to $CB$ through $A$ to the lines $\tilde \ell_{1}=\tilde A\tilde B$,
$\tilde \ell_{2}=\tilde A\tilde C$, and the line $\tilde\ell_{3}$
parallel to $\tilde C\tilde B$ through $\tilde A$ (see
Figure~\ref{fig:qlinear}).
\begin{figure}
\labellist
\small\hair 2pt
 \pinlabel {$A$} by 1.7 0 at 33 95
 \pinlabel {$B$} by -1.2 -0.2 at 278 32
 \pinlabel {$C$} [bl] at 234 164
 \pinlabel {$\ell_{1}$} [t] at 150 65
 \pinlabel {$\ell_{2}$} [br] at 132 129
 \pinlabel {$\ell_{3}$} [r] at 5 177
 \pinlabel {$\tilde \ell_{1}$} [t] at 497 80
 \pinlabel {$\tilde \ell_{2}$} [br] at 505 145
 \pinlabel {$\tilde \ell_{3}$} [r] at 440 190
 \pinlabel {$\tilde A$} by 1.7 -0.3 at 432 112
 \pinlabel {$\tilde B$} by -1.5 0.2 at 559 48
 \pinlabel {$\tilde C$} by -1 -0.4 at 577 175
 \pinlabel {$\alpha$} [l] at 57 97
 \pinlabel {$\beta$} [ ] at 263 46
 \pinlabel {$\gamma$} [t] at 228 153
 \pinlabel {$\beta$} [tl] at 47 80
 \pinlabel {$\gamma$} [ ] at 39 111
 \pinlabel {$\tilde\alpha$} [l] at 453 111
 \pinlabel {$\tilde\beta$} [b] at 551 64
 \pinlabel {$\tilde\gamma$} [tr] at 564 160
 \pinlabel {$\tilde\beta$} [t] at 439 98
 \pinlabel {$\tilde\gamma$} [ ] at 444 133
 \pinlabel {$\xrightarrow{\displaystyle\quad h\quad}$} [ ] at 344 110
\endlabellist
\centering
\includegraphics[scale=0.5]{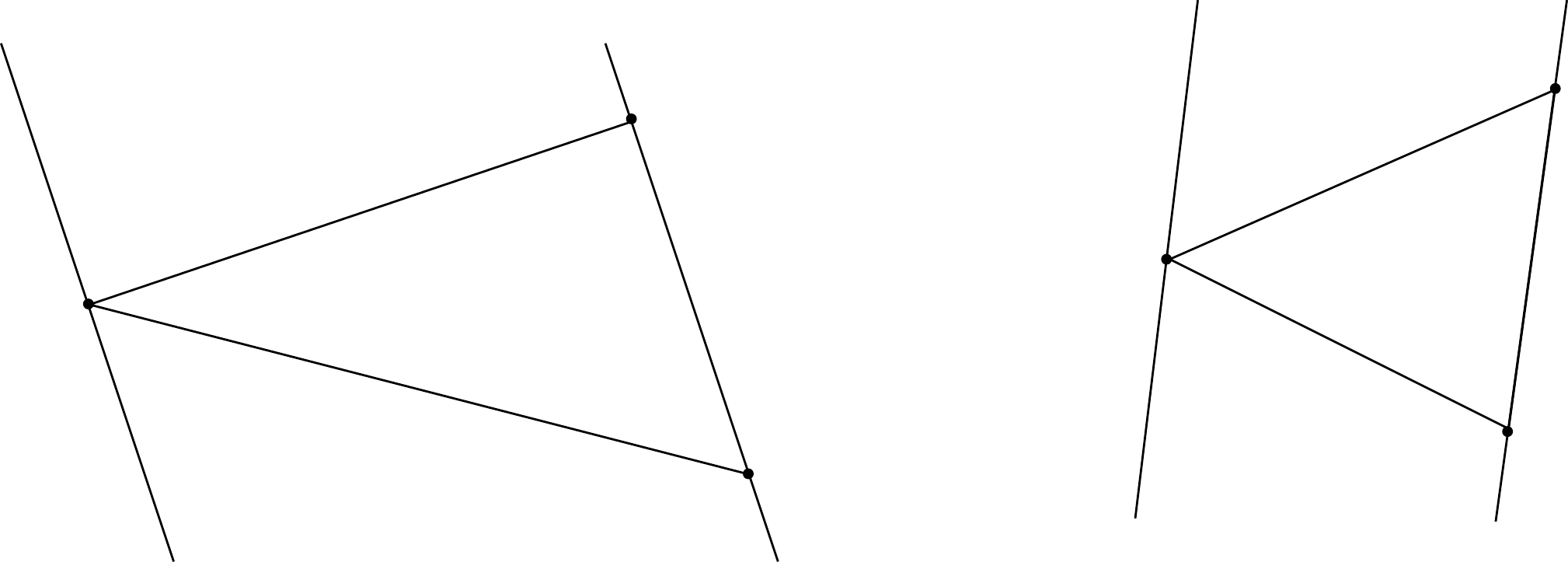}
\caption{The affine map $h$ from $T$ onto $\tilde T$. }
\label{fig:qlinear}
\end{figure}

The circumcircle preserving projective map $f$ also maps $\ell_{1}$ to
$\tilde\ell_{1}$ and $\ell_{2}$ to $\tilde\ell_{2}$, and it maps the
tangent $\ell_{3}'$ to the circumcircle at $A$ to the tangent
$\tilde\ell_{3}'$ to the circumcircle at $\tilde A$ (see
Figure~\ref{fig:qcircum}).
\begin{figure}
\labellist
\small\hair 2pt
 \pinlabel {$A$} by 1.7 0 at 25 157
 \pinlabel {$B$} by -1.2 0 at 270 94
 \pinlabel {$C$} [bl] at 226 225
 \pinlabel {$\tilde A$} by 1.7 -0.3 <2pt, -10pt> at 424 173
 \pinlabel {$\tilde B$} [tl] <2pt, -10pt> at 551 109
 \pinlabel {$\tilde C$} [bl] <2pt, -10pt> at 569 237
 \pinlabel {$\ell_{1}$} [t] at 142 126
 \pinlabel {$\ell_{2}$} [br] at 127 191
 \pinlabel {$\ell_{3}'$} [r] at 44 240
 \pinlabel {$\tilde \ell_{1}$} [t] <2pt, -10pt> at 490 141
 \pinlabel {$\tilde \ell_{2}$} [br] <2pt, -10pt> at 495 205
 \pinlabel {$\tilde \ell_{3}'$} [r] <2pt, -10pt> at 416 237
 \pinlabel {$\alpha$} [ ] at 60 158
 \pinlabel {$\beta$} [ ] at 251 112
 \pinlabel {$\gamma$} [t] at 220 213
 \pinlabel {$\beta$} [ ] at 48 179
 \pinlabel {$\gamma$} [ ] at 33 141
 \pinlabel {$\tilde\alpha$} [ ] <2pt, -10pt> at 449 173
 \pinlabel {$\tilde\beta$} [ ] <2pt, -10pt> at 543 129
 \pinlabel {$\tilde\gamma$} [ ] <2pt, -10pt> at 552 216
 \pinlabel {$\tilde\beta$} [ ] <2pt, -10pt> at 437 193
 \pinlabel {$\tilde\gamma$} [ ] <2pt, -10pt> at 441 153
 \pinlabel {$\xrightarrow{\displaystyle\quad f\quad}$} [ ] at 339 157
\endlabellist
\centering
\includegraphics[scale=0.5]{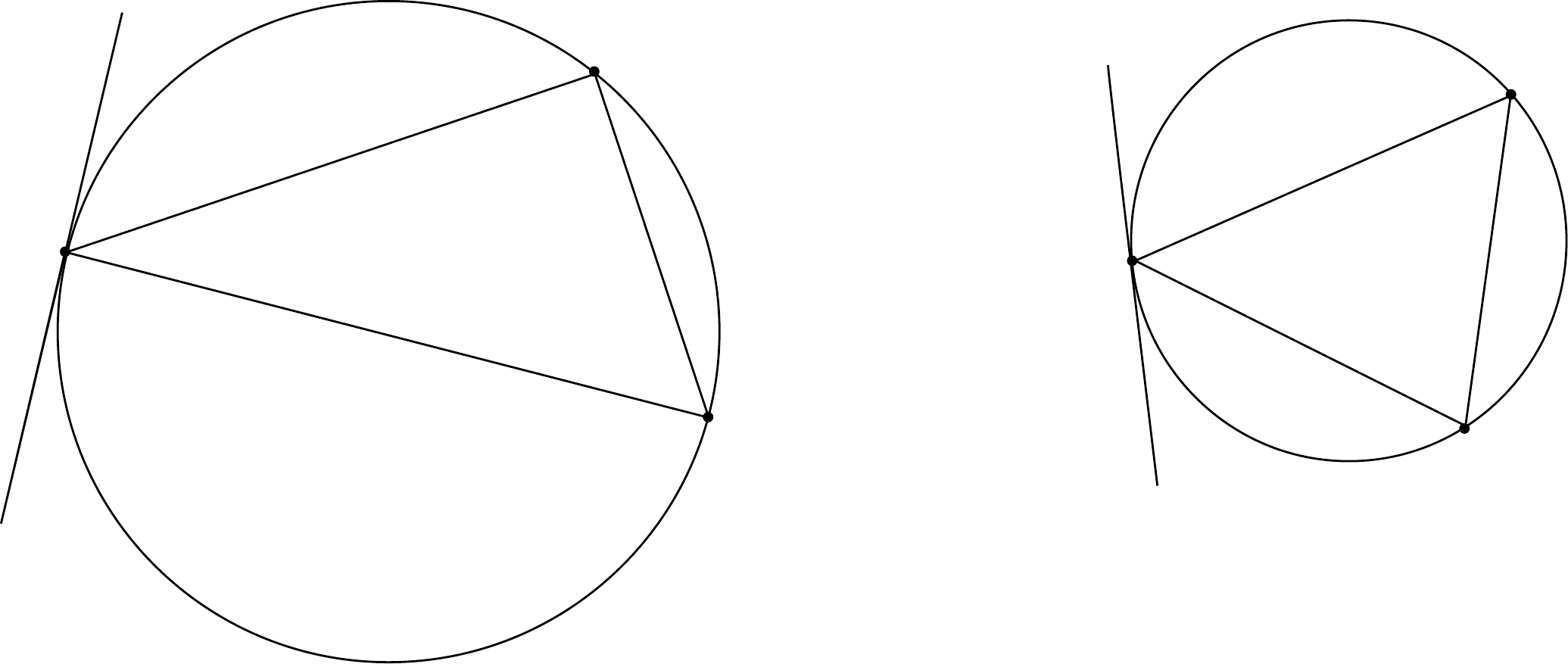}
\caption{The circumcircle preserving map $f$ from $T$ onto $\tilde T$.}
\label{fig:qcircum}
\end{figure}
The affine approximation of $f$ at $A$ (that is, in affine coordinates,
the first order Taylor polynomial) also maps these lines to the same
lines. Considering the angles at~$A$ and~$\tilde A$ in
Figures~\ref{fig:qlinear} and~\ref{fig:qcircum}, one finds that the
affine approximation of $f$ at $A$ is equal to $h$ up to composition
with
similarity transformations. Thus, $|\mu_{f}(A)|=|\mu_{h}|$.
\end{proof}

\section{The angle bisector preserving projective map}
\label{sec:angle_bisector}

Which projective transformation between two given triangles minimizes
the maximal dilatation? Since the maximal dilatation is
attained at a vertex (Corollary~\ref{cor:max_at_vertex}), it is enough
to minimize the maximal dilatation at the vertices. As it turns out,
there is a unique projective transformation that simultaneously
minimizes the dilatation at all three vertices:

\begin{theorem}
  \label{thm:optimal}
  Of all projective maps that map a triangle $T\subset\C$ with
  vertices $A$, $B$, $C$ onto a triangle $\tilde T\subset\C$ with
  vertices $\tilde A$, $\tilde B$, $\tilde C$, which are labeled in
  the same orientation so that the maps are orientation preserving,
  the one that maps the angle bisectors to the angle bisectors
  simultaneously minimizes $|\mu(A)|$, $|\mu(B)|$, and
  $|\mu(C)|$. That is, the angle bisector preserving projective
  transformation $f$ satisfies
  \begin{equation*}
    \label{eq:simul_min}
    |\mu_{f}(A)|\leq |\mu_{g}(A)|,\quad
    |\mu_{f}(B)|\leq |\mu_{g}(B)|,\quad
    |\mu_{f}(C)|\leq |\mu_{g}(C)|
  \end{equation*}
  for all projective transformations $g$ with $g(A)=\tilde{A}$,
  $g(B)=\tilde{B}$, $g(C)=\tilde{C}$, $g(T)=\tilde T$.
\end{theorem}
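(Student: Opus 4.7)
My plan is to reduce the global claim to a purely local (affine) optimization at each vertex, settle that local problem by a direct computation in bisector-adapted coordinates, and then verify that a single projective map---the one sending incenter to incenter---realizes the local minimum at all three vertices simultaneously. Because $|\mu_g(V)|$ depends only on the affine approximation $F_V^g$ of $g$ at $V$, and $g$ maps sides to sides, $F_V^g$ is an orientation-preserving affine map with $F_V^g(V)=\tilde V$ that sends the two side-lines $\ell_1,\ell_2$ of $T$ at $V$ to the two side-lines $\tilde\ell_1,\tilde\ell_2$ of $\tilde T$ at $\tilde V$. It therefore suffices to show: among all such admissible affine maps $F$, the eccentricity $|\mu_F|$ is minimized precisely when $F$ sends the internal angle bisector at $V$ to the internal angle bisector at $\tilde V$.

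For this local step at $V=A$, I would choose orthonormal coordinates with $A=\tilde A=0$ and the two internal bisectors both along the positive $x$-axis, so that the sides of $T$ and $\tilde T$ at these vertices lie along the directions $(\cos(\alpha/2),\pm\sin(\alpha/2))$ and $(\cos(\tilde\alpha/2),\pm\sin(\tilde\alpha/2))$, where $\alpha=\angle A$ and $\tilde\alpha=\angle\tilde A$. Any admissible $F$ is then parametrized by two scalars $a,b>0$ giving the scaling along each side. Writing out the matrix of $F$, substituting into the Wirtinger derivatives, and applying the sine addition identities yields
\begin{equation*}
 |\mu_F|^2 \;=\; \frac{\sin^2(\tfrac{\alpha-\tilde\alpha}{2}) + t\,\cos^2(\tfrac{\alpha-\tilde\alpha}{2})}{\sin^2(\tfrac{\alpha+\tilde\alpha}{2}) + t\,\cos^2(\tfrac{\alpha+\tilde\alpha}{2})}, \qquad t=\Big(\tfrac{a-b}{a+b}\Big)^2\in[0,1).
\end{equation*}
The derivative of this ratio with respect to $t$ has the sign of $\cos^2(\tfrac{\alpha-\tilde\alpha}{2})\sin^2(\tfrac{\alpha+\tilde\alpha}{2})-\cos^2(\tfrac{\alpha+\tilde\alpha}{2})\sin^2(\tfrac{\alpha-\tilde\alpha}{2})$, which factors via the identity $\cos^2 X\sin^2 Y-\cos^2 Y\sin^2 X = \sin(Y-X)\sin(Y+X)$ as $\sin\tilde\alpha\sin\alpha>0$. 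Hence $|\mu_F|^2$ is strictly monotone increasing in $t$, with unique minimum at $t=0$, equivalently $a=b$, which is exactly the condition that $F$ preserves the internal bisector.

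To assemble the global claim, I would use the concurrence of the three internal angle bisectors of a triangle at its incenter. Letting $I,\tilde I$ denote the incenters of $T,\tilde T$, the four-tuples $(A,B,C,I)$ and $(\tilde A,\tilde B,\tilde C,\tilde I)$ are each in general position, so there is a unique projective transformation $f$ sending one to the other; it maps $T$ onto $\tilde T$ preserving orientation, and because the bisector at each vertex is the line through that vertex and the incenter, $f$ is the angle bisector preserving projective map. Applying the local minimization to the differential of $f$ at each of $A,B,C$ then yields $|\mu_f(V)|\le|\mu_g(V)|$ for every admissible $g$, as claimed. The main technical obstacle is the trigonometric bookkeeping in the local step: the factorization identity above makes the monotonicity transparent, but deriving the one-parameter expression for $|\mu_F|^2$ in bisector-adapted coordinates and isolating the dependence on $t$ requires care.
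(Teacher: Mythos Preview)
Your proof is correct and follows essentially the same strategy as the paper: both reduce the claim to a local lemma about linear maps sending two lines through the origin to two other lines, asserting that dilatation is minimized precisely when the angle bisectors are preserved (the paper's Lemma~\ref{lem:anglebisector}). The paper leaves the proof of that lemma to the reader, whereas you supply an explicit argument in bisector-adapted coordinates, deriving the closed form for $|\mu_F|^2$ in terms of $t=\big(\tfrac{a-b}{a+b}\big)^2$ and checking monotonicity via the factorization $\cos^2X\sin^2Y-\cos^2Y\sin^2X=\sin(Y-X)\sin(Y+X)$; your incenter construction of the global map $f$ is likewise a clean way to verify that one projective transformation realizes the local optimum at all three vertices simultaneously.
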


This theorem follows immediately from the following Lemma, whose proof
we leave to the reader. (All arguments we know rely ultimately on one
or another more or less direct calculation.)

\begin{lemma}
\label{lem:anglebisector}
Of all linear maps in $\SL_{2}(\R)$ that map two one-dimensional
subspaces $\R v$, $\R w$ onto two one-dimensional subspaces
$\R\tilde v$, $\R\tilde w$, the map $f\in\SL_{2}(\R)$ has the least
dilatation if and only if it maps one, and hence both, of the angle bisectors
$\R\big(\frac{v}{|v|}\pm\frac{w}{|w|}\big)$ to the corresponding angle
bisector
$\R\big(\frac{\tilde v}{|\tilde v|}\pm\frac{\tilde w}{|\tilde
    w|}\big)$.
(This determines $f$ uniquely up to sign.)
\end{lemma}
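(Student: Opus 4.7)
The plan is to reduce by the $\mathrm{SO}(2)$ rotation action and then carry out a one-parameter minimization in complex coordinates. Pre- and post-composition with rotations in $\mathrm{SO}(2)\subset\SL_{2}(\R)$ preserves both the dilatation and the $\SL_{2}(\R)$ condition, so after rotating the source and target I may assume the two angle bisector lines of $(\R v,\R w)$ are the coordinate axes, and likewise for $(\R\tilde v,\R\tilde w)$. In complex notation this means $v=e^{i\theta}$, $w=e^{-i\theta}$, $\tilde v=e^{i\tilde\theta}$, $\tilde w=e^{-i\tilde\theta}$ for some $\theta,\tilde\theta\in(0,\pi/2)$, and the angle bisector preserving map is precisely the one commuting with complex conjugation. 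Writing a general $\R$-linear map as $g(z)=\alpha z+\beta\bar z$, one has $\det g=|\alpha|^{2}-|\beta|^{2}$ and $|\mu_{g}|=|\beta/\alpha|$, so the task becomes: minimize $|\beta/\alpha|$ subject to $|\alpha|^{2}-|\beta|^{2}=1$ and to the two line preservation constraints.

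The line preservation conditions $g(\R e^{\pm i\theta})\subset\R e^{\pm i\tilde\theta}$ are exactly that $e^{\mp i\tilde\theta}g(e^{\pm i\theta})$ are both real. Writing $\alpha=\alpha_{1}+i\alpha_{2}$, $\beta=\beta_{1}+i\beta_{2}$, taking imaginary parts of these two expressions, and forming their sum and difference, the system decouples into
\[
\alpha_{1}\sin(\theta-\tilde\theta)=\beta_{1}\sin(\theta+\tilde\theta),\qquad
\alpha_{2}\cos(\theta-\tilde\theta)=-\beta_{2}\cos(\theta+\tilde\theta),
\]
equivalently $\beta_{1}=s\alpha_{1}$ and $\beta_{2}=-c\alpha_{2}$ with $s=\sin(\theta-\tilde\theta)/\sin(\theta+\tilde\theta)$ and $c=\cos(\theta-\tilde\theta)/\cos(\theta+\tilde\theta)$. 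Both denominators are nonzero because $\theta+\tilde\theta\in(0,\pi)$ and $|\theta-\tilde\theta|<\pi/2$.

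Substituting gives
\[
|\mu_{g}|^{2}\;=\;\frac{s^{2}\alpha_{1}^{2}+c^{2}\alpha_{2}^{2}}{\alpha_{1}^{2}+\alpha_{2}^{2}},
\]
a convex combination of $s^{2}$ and $c^{2}$. The product-to-sum identities $\sin(\theta-\tilde\theta)\cos(\theta+\tilde\theta)=\tfrac{1}{2}(\sin 2\theta-\sin 2\tilde\theta)$ and $\cos(\theta-\tilde\theta)\sin(\theta+\tilde\theta)=\tfrac{1}{2}(\sin 2\theta+\sin 2\tilde\theta)$, combined with $\sin 2\theta,\sin 2\tilde\theta>0$ on $(0,\pi/2)$, yield $s^{2}<c^{2}$ throughout $(0,\pi/2)^{2}$. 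Hence the unique minimum (up to the global sign of $\alpha$) occurs at $\alpha_{2}=0$, which forces $\beta_{2}=0$ as well; both $\alpha$ and $\beta$ are then real, so $g$ sends each coordinate axis to itself and therefore maps each angle bisector to its corresponding one. The same relations show that as soon as \emph{one} bisector line is preserved one obtains $\alpha_{2}=0$, which accounts for the ``one, and hence both'' clause. The only step that requires real care is the decoupling leading to the displayed relations — the algebra is straightforward but it is easy to get tangled unless one splits the two imaginary-part equations into their symmetric and antisymmetric combinations from the start; everything after that is a routine convex-combination minimization plus an elementary trig inequality.
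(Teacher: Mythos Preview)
The paper does not actually prove this lemma: it explicitly ``leave[s] the proof to the reader'', adding only that ``all arguments we know rely ultimately on one or another more or less direct calculation.'' Your argument is precisely such a direct calculation, and it is correct in substance.

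One small slip to fix. You assert that ``both denominators are nonzero because $\theta+\tilde\theta\in(0,\pi)$ and $|\theta-\tilde\theta|<\pi/2$.'' The second condition controls $\cos(\theta-\tilde\theta)$, which is the \emph{numerator} of $c$; the denominator $\cos(\theta+\tilde\theta)$ can vanish, namely when $\theta+\tilde\theta=\pi/2$. In that borderline case the second constraint reads $\alpha_{2}\cos(\theta-\tilde\theta)=0$, forcing $\alpha_{2}=0$ outright (since $\cos(\theta-\tilde\theta)>0$), while $\beta_{2}$ is unconstrained; minimizing $|\mu_{g}|^{2}=(s^{2}\alpha_{1}^{2}+\beta_{2}^{2})/\alpha_{1}^{2}$ then gives $\beta_{2}=0$, and the conclusion is unchanged. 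It is worth inserting a sentence to cover this case. Everything else --- the $\mathrm{SO}(2)$ reduction, the decoupling via symmetric/antisymmetric combinations, the convex-combination minimization, and the trig inequality $s^{2}<c^{2}$ --- checks out.
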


\begin{figure}
  \centering
  \includegraphics{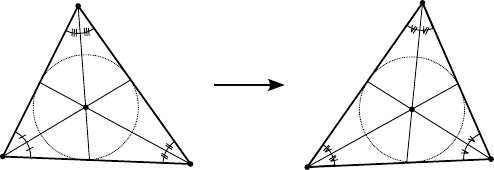}
  \caption{The angle bisector preserving projective transformation
    maps incircle center to incircle center, but it does in general
    not map incircle to incircle.}
  \label{fig:anglebisectors}
\end{figure}

The characterization of discrete conformal equivalence in terms of
projective maps (Definition and Theorem~\ref{def:dconf}~(iii)) remains true
if ``circumcircle preserving'' is replaced by ``angle bisector preserving'':

\begin{theorem}
  \label{thm:app_fit}
  The angle bisector preserving projective maps between corresponding
  triangles of combinatorially equivalent triangulations fit together
  continuously across edges if and only if the triangulations are
  discretely conformally equivalent.
\end{theorem}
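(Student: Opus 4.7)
The plan is to reduce the problem to the length cross-ratio condition in Definition and Theorem~\ref{def:dconf}(ii). Consider two combinatorially adjacent triangles $T=ijk$ and $T'=ijm$ sharing the edge $ij$, with images $\tilde T=\tilde\imath\tilde\jmath\tilde k$ and $\tilde T'=\tilde\imath\tilde\jmath\tilde m$. The restriction of a projective transformation $\RP^{2}\to\RP^{2}$ to a projective line is a M\"obius transformation $\RP^{1}\to\RP^{1}$, and such a map is determined by its values at three points. Both APP maps $f_T$ and $f_{T'}$ already send $i\mapsto\tilde\imath$ and $j\mapsto\tilde\jmath$, so their restrictions to the edge $ij$ agree (and the two projective maps fit continuously across the edge) if and only if they agree at one additional point of $ij$, equivalently, if and only if a suitable cross-ratio on $ij$ is preserved.

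To identify that third point, I would invoke the classical angle bisector theorem: the internal angle bisector from $k$ meets $ij$ at the point $P$ dividing the segment $ij$ from $i$ in the ratio $\ell_{ki}:\ell_{kj}$. Because $f_T$ is orientation preserving and sends angle bisectors to angle bisectors (as well as $T$ into $\tilde T$), the internal bisector from $k$ is sent to the internal bisector from $\tilde k$, and therefore $f_T(P)=\tilde P$, where $\tilde P$ divides $\tilde\imath\tilde\jmath$ in the ratio $\tilde\ell_{\tilde k\tilde\imath}:\tilde\ell_{\tilde k\tilde\jmath}$. Applying the same reasoning to $f_{T'}$ in the adjacent triangle yields a point $Q$ on $ij$ (with ratio $\ell_{mi}:\ell_{mj}$) and its image $\tilde Q$ (with ratio $\tilde\ell_{\tilde m\tilde\imath}:\tilde\ell_{\tilde m\tilde\jmath}$).

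Since M\"obius transformations preserve cross-ratio and both restrictions already send $(i,j)$ to $(\tilde\imath,\tilde\jmath)$, the two restrictions coincide on $ij$ if and only if
\[
[\,i,j;P,Q\,] \;=\; [\,\tilde\imath,\tilde\jmath;\tilde P,\tilde Q\,].
\]
A direct calculation using a parameter in which $i\leftrightarrow 0$ and $j\leftrightarrow 1$ gives
\[
[\,i,j;P,Q\,] \;=\; \frac{\ell_{ki}\,\ell_{mj}}{\ell_{kj}\,\ell_{mi}},
\]
and likewise for the image. This is exactly the reciprocal of the length cross-ratio of Definition and Theorem~\ref{def:dconf}(ii) (using $\ell_{xy}=\ell_{yx}$), so the edgewise compatibility condition for APP maps across $ij$ is equivalent to the length cross-ratio equality on that edge. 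Since this characterization holds edge by edge, the APP maps fit continuously across all interior edges if and only if condition~(ii) holds for every interior edge, i.e., if and only if the triangulations are discretely conformally equivalent. The only nontrivial input is the angle bisector theorem; the rest is projective bookkeeping strictly parallel to Definition and Theorem~\ref{def:dconf}(iii), with the foot of the angle bisector from the opposite vertex playing the role that the circumcircle plays in the CPP case.
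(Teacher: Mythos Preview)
Your argument is correct and is exactly the approach the paper intends: the paper's proof is a one-liner that cites the elementary angle bisector theorem together with the length cross-ratio characterization in Definition and Theorem~\ref{def:dconf}(ii), and you have simply unpacked those two ingredients into the projective bookkeeping on the shared edge. Nothing is missing, and nothing is done differently.
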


\begin{proof}
  This follows immediately from the elementary angle bisector theorem
  (Figure~\ref{fig:anglebisectors2})
  \begin{figure}
    \centering
    \labellist
    \small\hair 2pt
    \pinlabel {$a$} [tl] at 26 2
    \pinlabel {$b$} [br] at 17 24
    \pinlabel {$p$} [l] at 49 15
    \pinlabel {$q$} [l] at 39 36
    \endlabellist
    \includegraphics{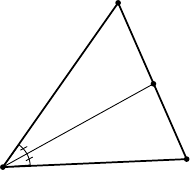}
    \caption{Angle bisector theorem: $\frac{a}{b}=\frac{p}{q}$.}
    \label{fig:anglebisectors2}
  \end{figure}
  and the characterization of discrete conformal equivalence in terms
  of length cross ratios (Definition and Theorem~\ref{def:dconf}~(ii)).
\end{proof}

\section{A one-parameter family of piecewise projective
  interpolations}
\label{sec:conclusion}

The angle bisector preserving projective transformation maps
incircle center to incircle center. The incircle center in
a triangle $ABC$ has barycentric coordinates $[a,b,c]$,
where $a$, $b$, $c$ denote the lengths of opposite sides.

The circumcircle preserving projective transformation maps symmedian
point to symmedian point (see Figure~\ref{fig:cpp}). The symmedian point
has barycentric coordinates $[a^{2},b^{2},c^{2}]$.
\begin{figure}
  \centering
  \labellist
  \small\hair 3pt
  \pinlabel {$A$} [r] at 19 50
  \pinlabel {$B$} [l] at 72 48
  \pinlabel {$C$} [b] at 40 93
  \pinlabel {$S$} [bl] at 41 65
  \pinlabel {$A'$} [r] at 155 52
  \pinlabel {$B'$} [tl] at 205 38
  \pinlabel {$C'$} [b] at 194 90
  \pinlabel {$S'$} [bl] at 184 59
  \endlabellist
  \includegraphics{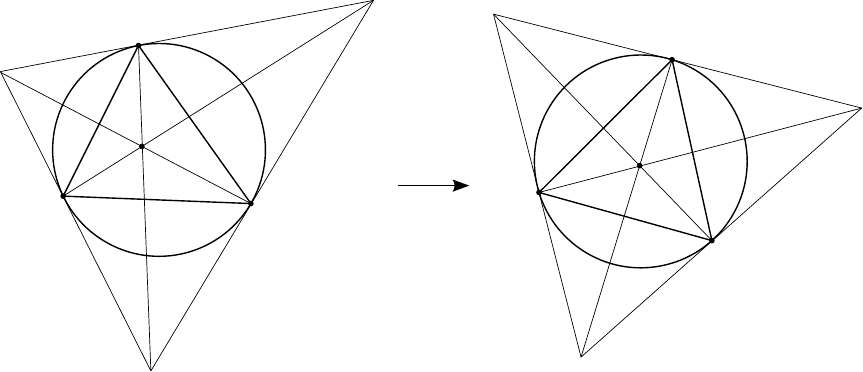}
  \caption{The circumcircle preserving projective transformation
    mapping $ABC$ to $A'B'C'$ maps the point $S$ to the point
    $S'$. The points $S$ and $S'$ are called the \emph{symmedian},
    \emph{Lemoine}, or \emph{Grebe point} of the triangles $ABC$ and
    $A'B'C'$, respectively. (They are the \emph{Gergonne points} of the
    triangles formed by the tangents.)}
  \label{fig:cpp}
\end{figure}

In the affine case, the barycenter is mapped to the barycenter. Its 
barycentric coordinates are $[1,1,1]$.

\begin{definition}
  For $t\in\R$, let the \emph{exponent-$t$-center} in a triangle $ABC$ be the
  point with barycentric coordinates $[a^{t},b^{t},c^{t}]$, where
  $a$, $b$, $c$ are the lengths of opposite sides.
\end{definition}

\begin{remark}
  For parameter values different from $t=0$ (barycenter), $t=1$
  (incircle center), and $t=2$ (symmedian point), the
  exponent-$t$-centers of a triangle seem to be fairly esoteric triangle
  centers. For example, the values $t=3,4$ correspond to triangle
  centers $X(31)$ and $X(32)$ in Kimberling's list~\cite{Kim}, and the
  values $t=-1,-2$ correspond to $X(75)$ and $X(76)$. If there are any
  other exponent-$t$-centers in Kimberling's list, they have indices greater
  than~300.
\end{remark}

The proof of Theorem~\ref{thm:app_fit} obviously generalizes to
``exponent-$t$-center preserving projective maps'':

\begin{theorem}
  For any $t\in\R$, the projective maps between corresponding
  triangles of combinatorially equivalent triangulations that map
  exponent-$t$-centers to exponent-$t$-centers fit together
  continuously across edges if and for $t\neq 0$ only if the
  triangulations are discretely conformally equivalent.
\end{theorem}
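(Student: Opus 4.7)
The plan is to follow the proof of Theorem~\ref{thm:app_fit} almost verbatim, replacing the classical angle bisector theorem by its exponent-$t$ analogue and then invoking the length cross-ratio characterization of discrete conformal equivalence from Definition and Theorem~\ref{def:dconf}(ii).

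First I would pin down the restriction of an exponent-$t$-center preserving projective map to a single edge. For a triangle $ijk$, the exponent-$t$-center has barycentric coordinates $[\ell_{jk}^{t}:\ell_{ki}^{t}:\ell_{ij}^{t}]$ with respect to $(i,j,k)$, so the cevian from $k$ through it meets the opposite side $ij$ at the point $P_{k}$ with barycentric coordinates $[\ell_{jk}^{t}:\ell_{ki}^{t}:0]$; equivalently, $P_{k}$ divides $ij$ in the ratio $iP_{k}:P_{k}j=\ell_{ki}^{t}:\ell_{jk}^{t}$. This is the exponent-$t$ generalization of the angle bisector theorem (recovered for $t=1$). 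Since a projective map between two lines is determined by three correspondences, the restriction to $ij$ of the exponent-$t$-center preserving projective map $ijk\to\tilde i\tilde j\tilde k$ is determined by $i\mapsto\tilde i$, $j\mapsto\tilde j$, $P_{k}\mapsto\tilde P_{k}$.

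Next, for an interior edge $ij$ shared by triangles $ijk$ and $ijm$, the two edge restrictions agree if and only if the cross-ratios match,
\[
[i,j,P_{k},P_{m}]=[\tilde i,\tilde j,\tilde P_{k},\tilde P_{m}].
\]
The (only potentially delicate) computation is to evaluate this cross-ratio. Choosing the affine coordinate on $ij$ with $i=0$ and $j=\ell_{ij}$, substituting the expressions for $P_{k}$ and $P_{m}$ and simplifying yields
\[
[i,j,P_{k},P_{m}]=\left(\frac{\ell_{ki}\ell_{jm}}{\ell_{jk}\ell_{im}}\right)^{t},
\]
i.e.\ the $-t$-th power of the length cross-ratio of the edge $ij$ appearing in Definition and Theorem~\ref{def:dconf}(ii). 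The continuity condition across $ij$ therefore becomes equality of the length cross-ratios raised to the power $t$.

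For $t\neq 0$, since all lengths are positive, the map $x\mapsto x^{t}$ is injective on $(0,\infty)$, so this is equivalent to equality of the length cross-ratios themselves, which by~(ii) characterizes discrete conformal equivalence; this gives both implications. For $t=0$ the exponent-$0$-center is the centroid, the points $P_{k}$ and $P_{m}$ both coincide with the midpoint of $ij$, and the exponent-$0$-center preserving projective map reduces to the affine map (the centroid being an affine invariant), so edge compatibility holds automatically and the ``only if'' direction fails. The main obstacle is the cross-ratio computation; once that identity is in hand, everything else follows from the uniqueness of a projective map on a line given three point correspondences together with~(ii).
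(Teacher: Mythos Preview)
Your proposal is correct and follows exactly the route the paper has in mind: the paper merely states that the proof of Theorem~\ref{thm:app_fit} ``obviously generalizes'' to exponent-$t$-centers, and that proof in turn rests on the angle bisector theorem together with the length cross-ratio characterization~(ii). Your argument is precisely this generalization spelled out in detail --- the cevian through the exponent-$t$-center replaces the angle bisector, and your cross-ratio identity $[i,j,P_{k},P_{m}]=(\ell_{ki}\ell_{jm}/\ell_{jk}\ell_{im})^{t}$ is the exponent-$t$ analogue that reduces edge compatibility to equality of length cross-ratios for $t\neq 0$.
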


This leads to a one-parameter family of exponent-$t$-center preserving
piecewise projective interpolation schemes for discretely conformally
equivalent triangulations. The parameter value $t=0$ corresponds to
piecewise linear interpolation, $t=1$ to angle bisector preserving,
and $t=2$ to circumcircle preserving piecewise projective
interpolation.  Figure~\ref{fig:family} visualizes these interpolation
schemes for some values of $t$ using the same example as
Figure~\ref{fig:pl_vs_cpp}, middle.
\begin{figure}
  \newlength{\xwidth}
  \setlength{\xwidth}{0.44\textwidth}
  \newlength{\xheight}
  \settowidth{\xheight}{\includegraphics[height=\xwidth]{figures/pp_interpolate0_0}}
  \centering
  \vspace*{-0.1\xheight}
  \begin{tikzpicture}
    \node[anchor=south west, rotate=90] at (0\xwidth,0\xheight) {\includegraphics[width=\xwidth]{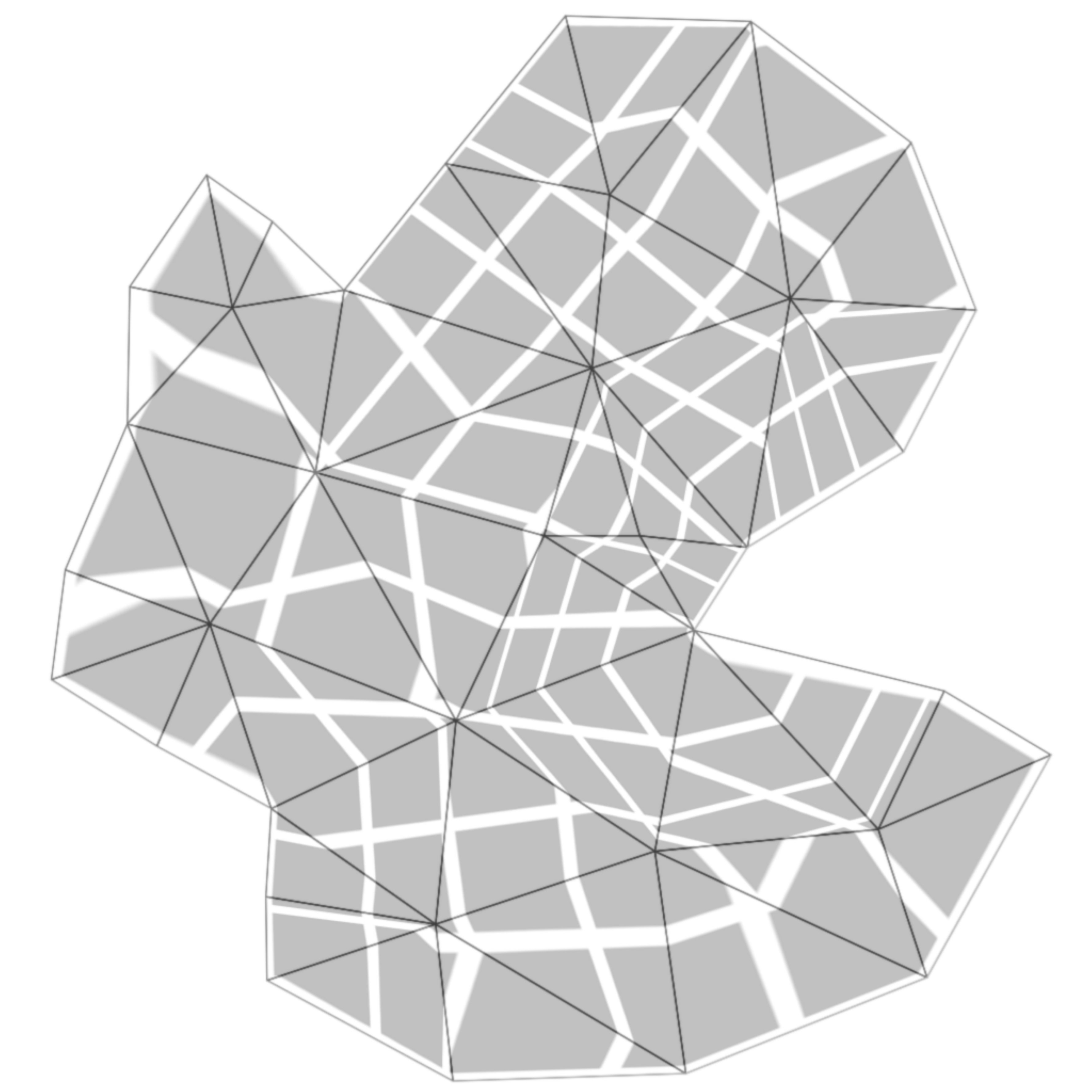}};
    \node[anchor=south west, rotate=90] at (0\xwidth,0.05\xheight) {$t=-1.0$};
    \node[anchor=south west, rotate=90] at (0\xwidth,0.96\xheight) {\includegraphics[width=\xwidth]{figures/pp_interpolate0_0}};
    \node[anchor=south west, rotate=90] at (0\xwidth,1.01\xheight) {$0.0$ (PL)};
    \node[anchor=south west, rotate=90] at (0\xwidth,1.92\xheight) {\includegraphics[width=\xwidth]{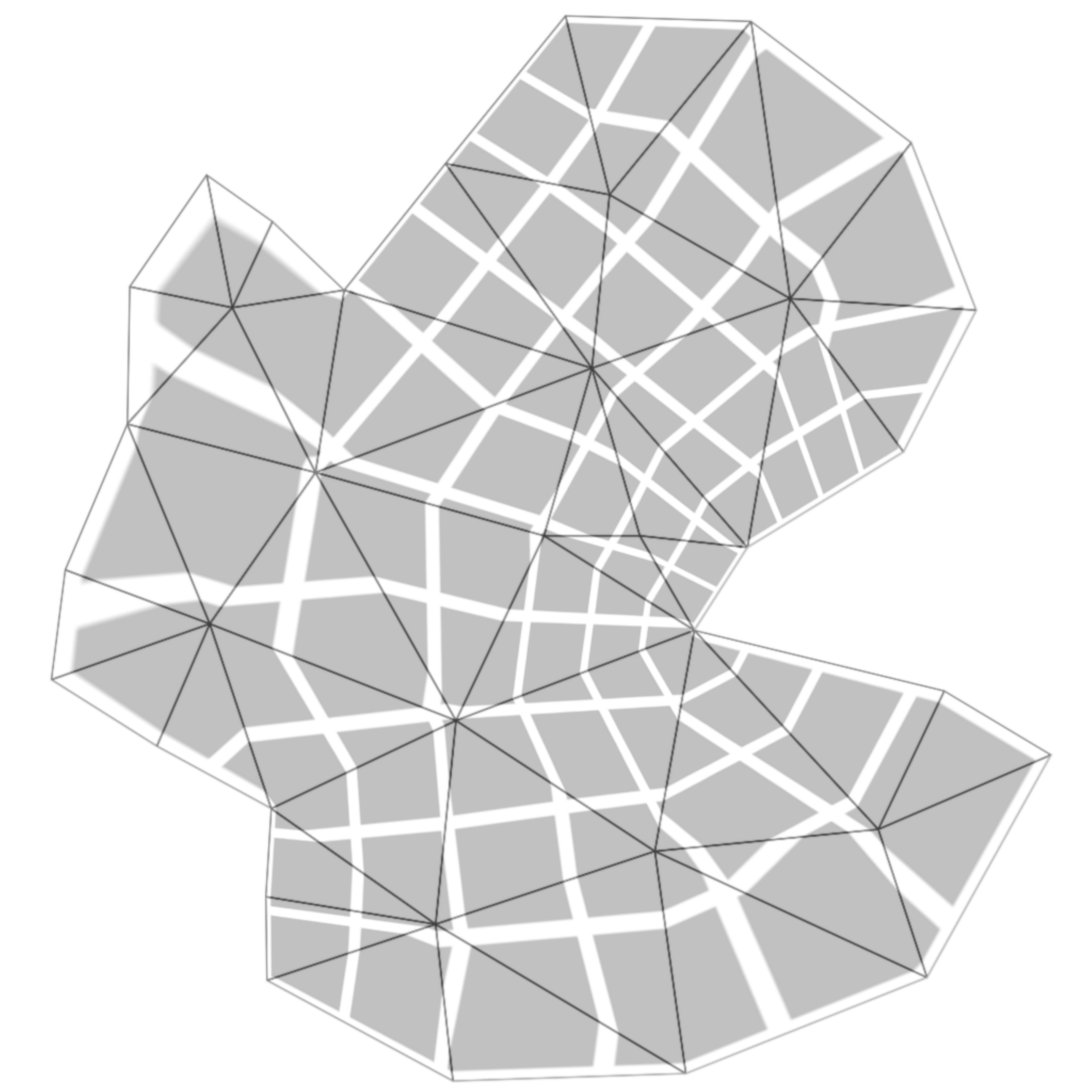}};
    \node[anchor=south west, rotate=90] at (0\xwidth,1.97\xheight) {$0.5$};
    \node[anchor=south west, rotate=90] at (0\xwidth,2.88\xheight) {\includegraphics[width=\xwidth]{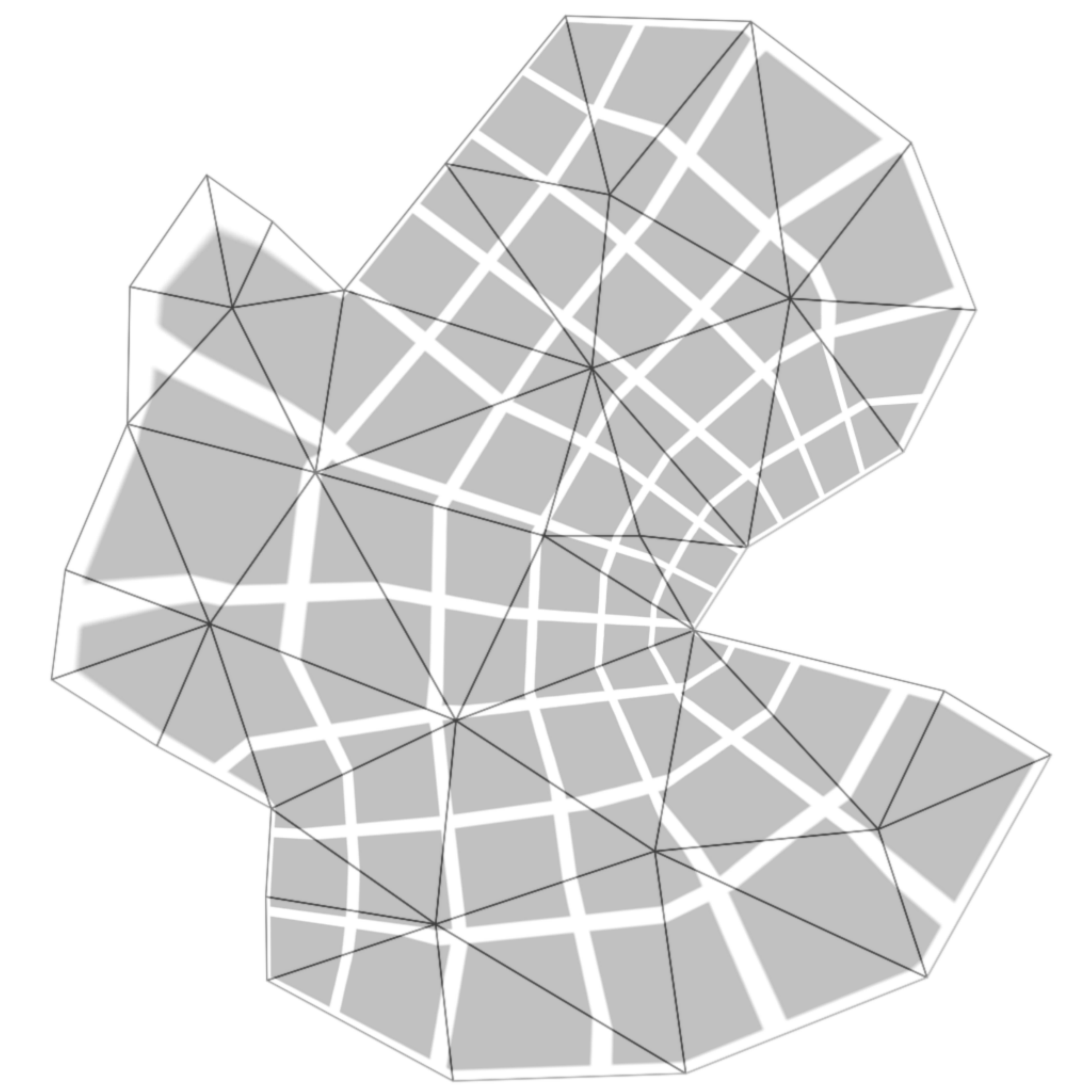}};
    \node[anchor=south west, rotate=90] at (0\xwidth,2.93\xheight) {$1.0$ (APP)};
    \node[anchor=south west, rotate=90] at (1\xwidth,2.88\xheight) {\includegraphics[width=\xwidth]{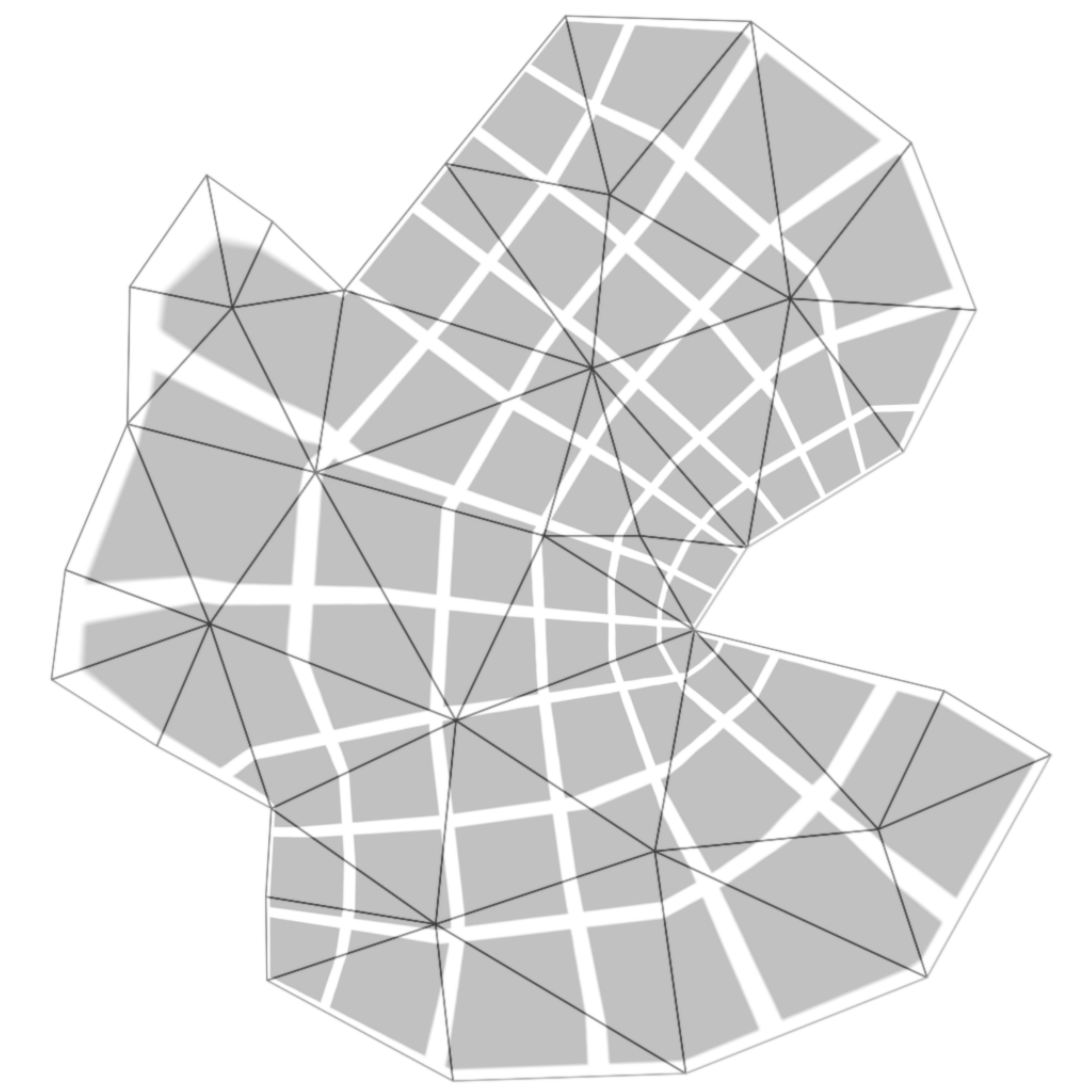}};
    \node[anchor=south west, rotate=90] at (1\xwidth,2.93\xheight) {$1.5$};
    \node[anchor=south west, rotate=90] at (1\xwidth,1.92\xheight) {\includegraphics[width=\xwidth]{figures/pp_interpolate2_0}};
    \node[anchor=south west, rotate=90] at (1\xwidth,1.97\xheight) {$2.0$ (CPP)};
    \node[anchor=south west, rotate=90] at (1\xwidth,0.96\xheight) {\includegraphics[width=\xwidth]{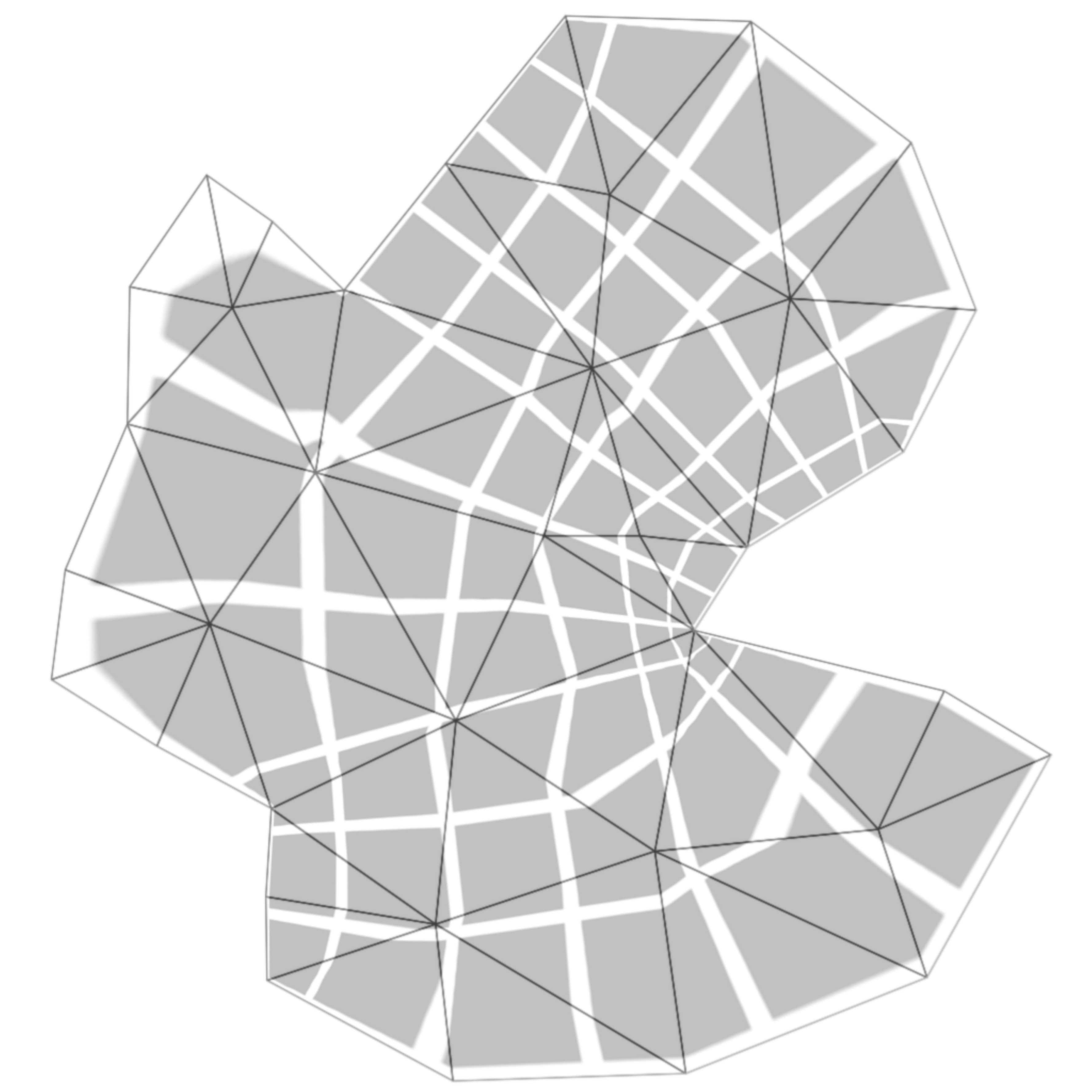}};
    \node[anchor=south west, rotate=90] at (1\xwidth,1.01\xheight) {$2.5$};
    \node[anchor=south west, rotate=90] at (1\xwidth,0\xheight) {\includegraphics[width=\xwidth]{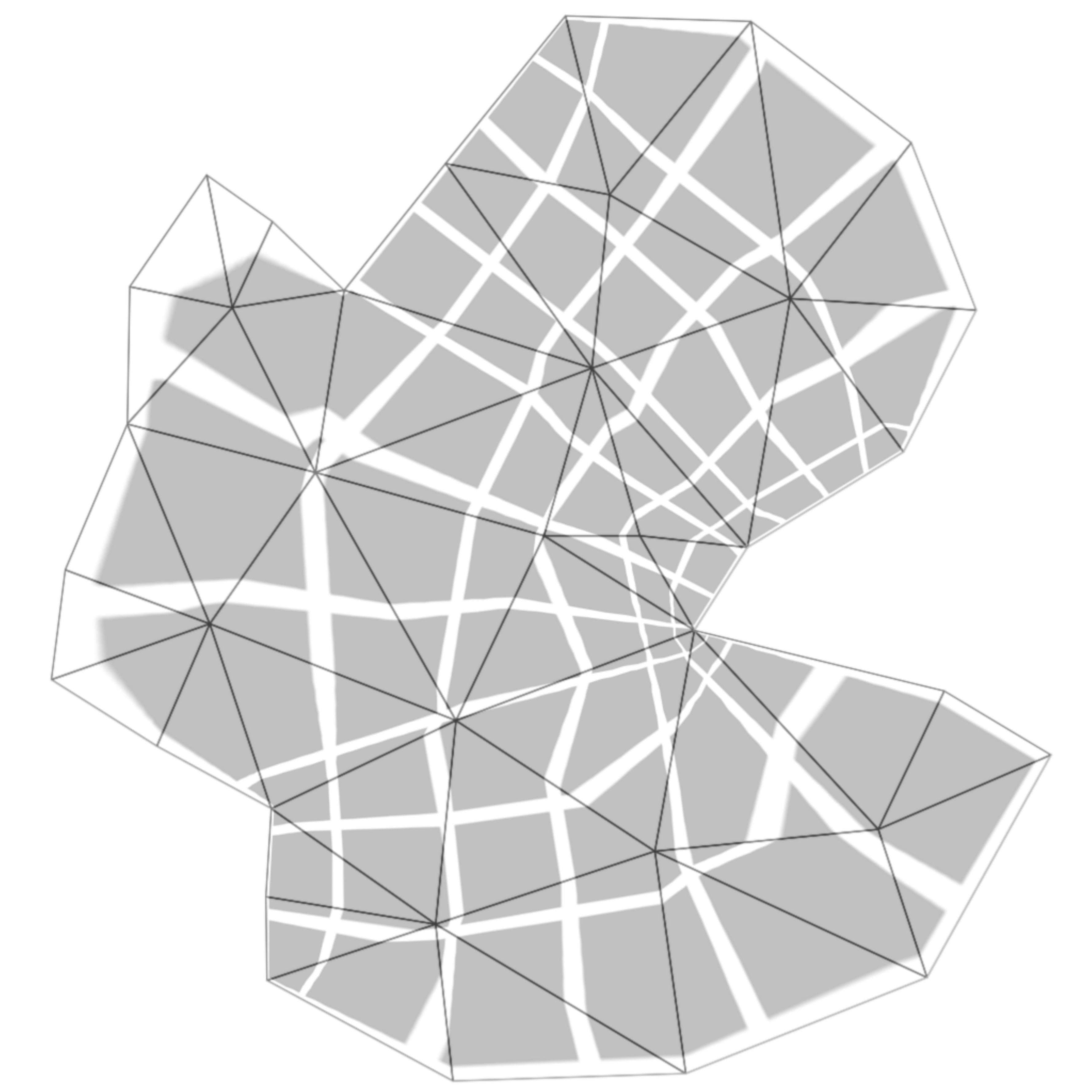}};
    \node[anchor=south west, rotate=90] at (1\xwidth,0.05\xheight) {$3.0$};
  \end{tikzpicture}
  \vspace*{-0.1\xheight}
  \caption{One-parameter family of piecewise projective
    interpolations.}
  \label{fig:family}
\end{figure}

\begin{remark}
  \label{rem:why}
  In our eyes, circumcircle preserving piecewise projective
  interpolation ($t=2$, CPP) looks slightly better than angle bisector
  preserving interpolation ($t=1$, APP). We have found this
  in other examples as well. Since APP has in general
  lower maximal dilatation per triangle, this indicates that low
  dilatation is not what makes CPP interpolation look
  better. We do not know which mathematical property of CPP
  interpolation accounts for the nicer appearance.
\end{remark}

\section*{Acknowledgement} This research was supported by the DFG~Collaborative
  Research Center TRR~109 ``Discretization in Geometry and Dynamics''.

\bibliographystyle{abbrv}
\bibliography{qcp}

\vspace{2\baselineskip}\noindent%
Stefan Born
\href{mailto:born@math.tu-berlin.de}{<born@math.tu-berlin.de>}\\
Ulrike B\"ucking
\href{mailto:buecking@math.tu-berlin.de}{<buecking@math.tu-berlin.de>}\\
Boris Springborn
\href{mailto:boris.springborn@tu-berlin.de}{<boris.springborn@tu-berlin.de>}

\medskip\noindent%
Technische Universit\"at Berlin\\
Sekretariat MA 8-3\\
Strasse des 17.\ Juni 136\\
10623 Berlin

\end{document}